\newtheorem{lemma}{Lemma}[section]
\newtheorem{theorem}[lemma]{Theorem}
\newtheorem{proposition}[lemma]{Proposition}
\newtheorem{corollary}[lemma]{Corollary}
\renewenvironment{proof}[1][\proofname]{{\sc #1. }}{\qed}
\newtheorem{theoremletters}{Theorem}
\theoremstyle{definition}
\newtheorem{example}[lemma]{Example}
{\bf}{\rm}
\newcommand{\abs}[1]{\ensuremath{\left| #1 \right|}}
\newcommand{\op}{\operatorname}
\newcommand{\ce}[2]{\pmb{\op{C}}_{#1}(#2)}
\newcommand{\no}[2]{\pmb{\op{N}}_{#1}(#2)}
\newcommand{\ze}[1]{\pmb{\op{Z}}(#1)}
\newcommand{\fit}[1]{\pmb{\op{F}}(#1)}
\newcommand{\rad}[2]{\pmb{\op{O}}_{#1}(#2)}
\newcommand{\radd}[2]{\pmb{\op{O}}_{#1, #1'}(#2)}
\newcommand{\syl}[2]{\op{Syl}_{#1}\left(#2\right)}
\newcommand{\hall}[2]{\op{Hall}_{#1}\left(#2\right)}
\newcommand{\van}[1]{{\op{Van}}(#1)}
\newcommand{\irr}[1]{{\op{Irr}}(#1)}
\begin{document}

\title{\bf On zeros of irreducible characters lying in a normal subgroup}

\author{\sc M. J. Felipe $^{*}$ $\cdot$ N. Grittini $^{\ddagger}$ $\cdot$ V. Sotomayor
\thanks{Instituto Universitario de Matemática Pura y Aplicada (IUMPA-UPV), Universitat Polit\`ecnica de Val\`encia, Camino de Vera s/n, 46022 Valencia, Spain. \newline
\indent $^{\ddagger}$Dipartimento di Matematica U. Dini, Universit\`a degli Studi di Firenze, viale Morgagni 67/a, 50134 Firenze, Italy.\newline
\Letter: \texttt{mfelipe@mat.upv.es}, \texttt{nicola.grittini@unifi.it}, \texttt{vicorso@doctor.upv.es} \newline \rule{6cm}{0.1mm}\newline
The first author is supported by Proyecto Prometeo II/2015/011, Generalitat Valenciana (Spain). The research of the second author is partially funded by the Istituto Nazionale di Alta Matematica - INdAM. The third author acknowledges the predoctoral grant ACIF/2016/170, Generalitat Valenciana (Spain). The first and third authors are also supported by Proyecto PGC2018-096872-B-I00, Ministerio de Ciencia, Innovación y Universidades (Spain). \newline
}}

\date{}

\maketitle

\begin{abstract}
\noindent Let $N$ be a normal subgroup of a finite group $G$. In this paper, we consider the elements $g$ of $N$ such that $\chi(g)\neq 0$ for all irreducible characters $\chi$ of $G$. Such an element is said to be \emph{non-vanishing in} $G$. Let $p$ be a prime. If all $p$-elements of $N$ satisfy the previous property, then we prove that $N$ has a normal Sylow $p$-subgroup. As a consequence, we also study certain arithmetical properties of the $G$-conjugacy class sizes of the elements of $N$ which are zeros of some irreducible character of $G$. In particular, if $N=G$, then new contributions are obtained.

\medskip

\noindent \textbf{Keywords} Finite groups $\cdot$ Normal subgroups $\cdot$ Irreducible characters $\cdot$ Conjugacy classes 

\smallskip

\noindent \textbf{2010 MSC} 20C15 $\cdot$ 20E45 
\end{abstract}

%%%%%%%%%%%%%%%%%%%%%%%%%%%%%%%%%%%%%%%%%%%%%%%%%%%%%%%%%%%%%%%%%%%%%%%%%%%%%%%%%%%%%%%%%%%%%%%%

\section{Introduction}

In the sequel, all groups considered are finite. Within character theory, a classical theorem of Burnside asserts that a non-linear irreducible character of a finite group always vanishes on some element. It is not difficult to see that the converse is also true, so the rows of the character table of a group that contain a zero entry are completely characterised. However, the ``dual'' situation for conjugacy classes fails in general: a column that corresponds to a non-central conjugacy class may not contain a zero. This fact somehow violates the standard duality that in many cases arises between irreducible characters and conjugacy classes of a group. Therefore, for a group $G$, an element $g$ is said to be \emph{non-vanishing} in $G$ if $\chi(g)\neq 0$ for every irreducible character $\chi$ of $G$. 

An immediate corollary to the aforementioned Burnside's result is that a group is abelian if and only if every element is non-vanishing. I.M. Isaacs, G. Navarro and T.R. Wolf obtained in \cite{INW} elegant results about the location of non-vanishing elements in certain groups. For example, for a nilpotent group $G$, an element is non-vanishing if and only if it lies in the centre of $G$. They also proved that if $G$ is soluble, then $g\fit{G}$ is a $2$-element for a non-vanishing element $g$ of $G$. Consequently, if $g$ is of odd order, then $x$ lies in $\fit{G}$. These authors conjectured that every non-vanishing element of a
soluble group $G$ lies in $\fit{G}$, and it is still an open problem. In this paper, we prove the following result which provides further evidence for this conjecture.

\begin{theoremletters}
\label{teoA}
Let $N$ be a normal subgroup of a group $G$, and let $p$ be a prime. If $\chi(x)\neq 0$ for every $p$-element $x\in N$ and for all $\chi\in\op{Irr}(G)$, then $N$ has a normal Sylow $p$-subgroup. 

In particular, if $\chi(x)\neq 0$ for every prime power order element $x\in N$ and for all $\chi\in\op{Irr}(G)$, then $N$ is nilpotent.
\end{theoremletters}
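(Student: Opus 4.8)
We deduce the second assertion from the first: if every element of $N$ of prime power order is non-vanishing in $G$, then the first part applied to each prime $p$ dividing $|N|$ gives $\rad{p}{N}\in\syl{p}{N}$, so $N$ is the direct product of its (normal) Sylow subgroups and hence nilpotent. We prove the first assertion by induction on $|G|+|N|$. If $p\nmid|N|$ there is nothing to prove, so assume $p\mid|N|$ and fix $P\in\syl{p}{N}$; the goal is $P\trianglelefteq N$.

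Set $R=\rad{p'}{N}$, characteristic in $N$, hence normal in $G$. If $R\neq1$, then every $p$-element of $N/R$ lifts to a $p$-element of $N$ (through a Schur--Zassenhaus complement of $R$ in the preimage of the cyclic $p$-group it generates), and $\irr{G/R}\subseteq\irr{G}$, so $(G/R,N/R)$ inherits the hypothesis; by induction $N/R$ has a normal Sylow $p$-subgroup, whose preimage $L$ in $N$ satisfies $L\trianglelefteq G$, $L=R\rtimes P$, and $P\in\syl{p}{L}$. If $L<N$ then $(G,L)$ inherits the hypothesis and $|L|<|N|$, so by induction $P=\rad{p}{L}\trianglelefteq L$; being characteristic in $L\trianglelefteq G$ it is normal in $N$. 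So we may assume $L=N$, i.e.\ $N=R\rtimes P$ has a normal $p$-complement $R\neq1$, and repeating this reasoning with a minimal normal subgroup of $G$ inside $R$ reduces us further, unless $R$ is itself a minimal normal subgroup of $G$. On the other hand, if $R=1$, take a minimal normal subgroup $M$ of $G$ with $M\leq N$; as $M$ is not a $p'$-group, either $M$ is an elementary abelian $p$-group — in which case every preimage in $N$ of a $p$-element of $N/M$ is again a $p$-element, so $(G/M,N/M)$ inherits the hypothesis, by induction $N/M$ has a normal Sylow $p$-subgroup, and its preimage in $N$ is a normal $p$-subgroup of $N$ of $p$-power index, i.e.\ a normal Sylow $p$-subgroup of $N$ — or else $M=S_1\times\cdots\times S_t$ is a direct product of copies of a nonabelian simple group $S$ with $p\mid|S|$.

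Two configurations remain: (a)~$N=M\rtimes P$ with $M$ a minimal normal subgroup of $G$ of order prime to $p$ and $P\in\syl{p}{N}$; (b)~$\rad{p'}{N}=1$ and $M=S_1\times\cdots\times S_t\leq N$ is a nonabelian minimal normal subgroup of $G$ with $p\mid|S|$. In each, assuming the conclusion fails (respectively, that the hypothesis holds), we aim to produce a $p$-element $y\in N$ and $\chi\in\irr{G}$ with $\chi(y)=0$. The mechanism is uniform: choose $\theta\in\irr{M}$ suitably, take an irreducible constituent $\psi$ over $\theta$ of a character induced up to $I_N(\theta)$ and then up to $N$, and pick $\chi\in\irr{G}$ over $\psi$; since $\chi_N$ is a multiple of the sum over the $G$-orbit of $\psi$ and an induced character vanishes off the conjugates of the subgroup inducing it, $\chi$ vanishes on every element of $N$ not $G$-conjugate into $I_N(\theta)$, so it suffices to find a $p$-element of $N$ outside all $G$-conjugates of $I_N(\theta)$. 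In configuration (b) the decisive choice is a ``diagonal'' character: using the classification of finite simple groups, fix a nontrivial $p$-element $s^{\ast}\in S$ and $\theta^{\ast}\in\irr{S}$ with $\theta^{\ast}$ vanishing on the whole $\op{Aut}(S)$-class of $s^{\ast}$ (it is known that nontrivial $p$-elements of nonabelian simple groups are zeros of irreducible characters, and this can be arranged $\op{Aut}$-stably), transport $\theta^{\ast}$ to each $S_i$ by a fixed isomorphism, let $\theta$ be the resulting tensor product and $y\in S_1$ the corresponding $p$-element. As $G$ permutes the factors and normalises $M$, every $G$-conjugate of $\theta$ is again such a tensor product whose first-factor component lies in the $\op{Aut}(S)$-orbit of $\theta^{\ast}$, hence $\theta^{g}(y)=0$ for all $g\in G$ and therefore $\chi(y)=0$, contradicting the hypothesis. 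Thus (b) cannot occur, and when every minimal normal subgroup of $G$ inside $N$ is abelian we are back in the case already settled.

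In configuration (a) one takes $\theta$ to be a non-$P$-invariant irreducible character of $M$, so that $I_N(\theta)\cap P$ is a proper subgroup of $P$, and the argument then needs a $p$-element of $N$ lying outside every $G$-conjugate of $I_N(\theta)$, equivalently outside every $G$-conjugate of the proper subgroup $I_N(\theta)\cap P$ of $P$. Establishing this is, I expect, the main obstacle of the proof: a priori many $p$-elements of $N$ may be $G$-conjugate into a given proper subgroup of $P$, so one must choose $\theta$ optimally — for instance with $P$-stabiliser as small as possible, or by handling an entire $G$-orbit of such characters of $M$ — and then combine a transfer- or Frattini-type argument in $G$ with the $\mathbb{F}_q[G]$-module structure of $M$ (when $M$ is elementary abelian) to force an uncovered $p$-element into existence. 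Once this fusion-control step is available, the induction closes and $P\trianglelefteq N$ in every remaining case, completing the proof; the other genuinely substantial ingredient is the classification-based statement on zeros of simple groups used in (b), while all the reductions above are routine.
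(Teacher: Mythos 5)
Your reduction to the two configurations is sound, and your treatment of configuration (b) matches the paper's in substance: the paper also handles the non-abelian minimal normal subgroup via a CFSG-based statement (\cite[Theorem 2.1]{MN}: either $S$ has a $p$-defect zero character, or some constituent of $(1_P)^S$ extends to $\op{Aut}(S)$ and vanishes on a $p$-element), followed by a diagonal/Gallagher argument. But configuration (a) — the coprime case, which is the genuinely soluble-flavoured heart of the theorem — is not proved in your write-up: you state yourself that finding a $p$-element of $N$ outside every $G$-conjugate of $I_N(\theta)$ is ``the main obstacle'' and only gesture at transfer/fusion and module-theoretic ideas without carrying any of them out. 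This is a real gap, not a routine verification: for a generic non-invariant $\theta$ the union of the $G$-conjugates of $I_N(\theta)$ can perfectly well contain every $p$-element of $N$, so the strategy as stated does not close.

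The paper's way around this is to change the target rather than to control fusion. After passing to the quotient by $M$ one gets $P_0M\unlhd G$; one then takes a chief factor $K/M$ of $G$ with $K\leqslant P_0M$, so that $A:=K\cap P_0$ is an \emph{abelian} $p$-group acting coprimely and (since $\ce{A}{M}\leqslant\rad{p}{N}=1$ by a Frattini argument) \emph{faithfully} on the characteristically simple $p'$-group $M$. The decisive ingredient is then \cite[Lemma 2.8]{DPSS} (Lemma \ref{contradiction_lemma}): such an action admits $\theta\in\irr{M}$ with $I_A(\theta)=1$. Consequently $\eta=\theta^K$ is irreducible and, being induced from $M$, vanishes on all of $K\smallsetminus M$ — in particular on every non-trivial element of $A$, all of which are $p$-elements of $N$. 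Any $\chi\in\irr{G}$ over $\eta$ then vanishes there (its restriction to $K$ is a sum of $G$-conjugates of $\eta$, each induced from $M$), so no choice of ``uncovered'' element is needed at all; the remaining work is only to produce such a $\chi$ lying additionally over $\beta$, via a coprime extension of $\theta$ to its inertia group and Gallagher. To complete your proof you would need either to supply this lemma (or an equivalent trivial-inertia statement) together with the reduction to an abelian faithful coprime action, or to substantiate the fusion argument you sketch — as it stands, the first assertion of Theorem \ref{teoA} is not established in the abelian case.
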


Therefore, the arithmetical properties of the non-vanishing elements of $G$ that lie in a normal subgroup $N$ control the structure of $N$. This is interesting since, although we cannot construct the character table of $N$ from the one of $G$, normal subgroups and vanishing elements of $G$ can be easily read from its character table.

Regarding the first claim of Theorem \ref{teoA} when $N=G$, we provide extra information on the structure of a $p$-complement of $G$ in Corollary \ref{cor-dpss}, which extends \cite[Theorem A]{DPSS}. Concerning the second assertion in Theorem \ref{teoA}, when $N=G$ it holds that the group is abelian (see Theorem \ref{carac}). However, this fact might not happen for the case of a normal subgroup as Example \ref{suz} shows.

Actually, we prove Theorem \ref{teoA} as a consequence of the next result. We will denote by $1_G$ the trivial character of a group $G$.

\begin{theoremletters}
\label{TEOB}
Let $N$ be a normal subgroup of a group $G$, and let $P$ be a Sylow $p$-subgroup of $G$ for some prime $p$. Let $P_0=P\cap N$ and $\beta\in\irr{P/P_0}$. Then the following conditions are pairwise equivalent:

\emph{(i)} $P_0$ is a normal Sylow $p$-subgroup of $N$.

\emph{(ii)} $\chi(x)\neq 0$ for all irreducible constituents $\chi$ of $(1_{P_0})^G$ and all $x\in P_0$.

\emph{(iii)} $\chi(x)\neq 0$ for all irreducible constituents $\chi$ of $\beta^G$ and all $x\in P_0$.
\end{theoremletters}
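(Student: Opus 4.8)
The plan is to prove the cycle of implications $(i)\Rightarrow(ii)\Rightarrow(iii)\Rightarrow(i)$. The implication $(i)\Rightarrow(ii)$ is immediate: if $P_0$ is a normal Sylow $p$-subgroup of $N$ then it is characteristic in $N$, hence normal in $G$, so the irreducible constituents of $(1_{P_0})^G$ are precisely the $\chi\in\irr{G}$ with $P_0\leq\ker\chi$, and each such $\chi$ takes the value $\chi(1)\neq0$ on every element of $P_0$. For $(ii)\Rightarrow(iii)$, note that since $P_0\leq\ker\beta$ we have $\langle(1_{P_0})^P,\beta\rangle=\langle1_{P_0},\beta_{P_0}\rangle=\beta(1)\neq0$, so $\beta$ is a constituent of $(1_{P_0})^P$ and therefore $\beta^G$ is a constituent of $\big((1_{P_0})^P\big)^G=(1_{P_0})^G$; hence every irreducible constituent of $\beta^G$ is already an irreducible constituent of $(1_{P_0})^G$, and $(ii)$ yields $(iii)$ at once.

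The real content is $(iii)\Rightarrow(i)$, which I would establish by induction on $|G|$. The first move is to factor out $K:=\rad{p}{N}$. As $K$ is characteristic in $N$ it is normal in $G$, and $K\leq P_0\leq\ker\beta$; writing out the induced-character formula for $\beta^G$ and using $K\trianglelefteq G$, one sees that $\beta^G$ is constant, equal to $\beta^G(1)$, on $K$, so $K$ lies in the kernel of every irreducible constituent of $\beta^G$. Consequently the hypothesis of $(iii)$ descends to $G/K$ together with $P/K$, $P_0/K$ and the character of $(P/K)/(P_0/K)\cong P/P_0$ given by $\beta$; and since $\rad{p}{N/K}=1$ we are reduced to the case $\rad{p}{N}=1$, where the goal becomes to prove $P_0=1$.

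So assume $\rad{p}{N}=1$ and, for a contradiction, $P_0\neq1$; I want to exhibit an irreducible constituent $\chi$ of $\beta^G$ vanishing at some element of $P_0$. The key auxiliary computation is that Mackey's formula, combined with $N\trianglelefteq G$ and $P_0\leq\ker\beta$, gives $(\beta^G)_N=\beta(1)\,|G:PN|\,(1_{P_0})^N$. Thus every irreducible constituent of $\chi_N$, for any constituent $\chi$ of $\beta^G$, is a constituent of the permutation character $\pi:=(1_{P_0})^N$; in particular, if $\varphi$ is a $G$-invariant irreducible constituent of $\pi$ with $\varphi(x)=0$ for some $x\in P_0$, then a constituent $\chi$ of $\beta^G$ lying over $\varphi$ satisfies $\chi_N=e\varphi$ by Clifford's theorem, whence $\chi(x)=e\varphi(x)=0$, the desired contradiction. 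The problem therefore becomes one about the faithful permutation character $\pi$ of $N$ on the cosets of a Sylow $p$-subgroup, under $\rad{p}{N}=1$ and $P_0\neq1$: I would analyse it through $F^*(N)$, distinguishing the case $\fit{N}\neq1$ --- a nontrivial normal $p'$-subgroup acted on faithfully by $P_0$, which provides chief factors of $N$ on which $P_0$ acts nontrivially --- from the case $\fit{N}=1$, in which $F^*(N)=E(N)$ is a direct product of non-abelian simple groups with trivial centraliser in $N$; in both cases I would invoke the known results on the location and vanishing of non-vanishing elements, as in \cite{INW} and the literature built around it.

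I expect the main obstacle to be precisely this final step. The reductions to $\rad{p}{N}=1$ and to the generalised Fitting subgroup are routine, but the hypothesis only restricts the irreducible constituents of $\beta^G$, so the zero has to be located inside that prescribed family rather than anywhere in $\irr{G}$; the device of passing to a $G$-invariant constituent of $(1_{P_0})^N$ is what makes this manageable, yet the almost-simple configuration still has to be settled by appealing to (classification-dependent) information on non-vanishing $p$-elements of simple and almost-simple groups.
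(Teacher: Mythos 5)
Your implications (i)$\Rightarrow$(ii)$\Rightarrow$(iii) are correct and efficient (the observation that $\beta^G$ is a constituent of $(1_{P_0})^G$ because $P_0\leq\ker\beta$ is a clean way to chain (ii) to (iii); the paper instead derives both from a single more general statement with an arbitrary subgroup $H$ satisfying $H\cap N=P_0$). Your reductions for (iii)$\Rightarrow$(i) are also sound: factoring out $\rad{p}{N}$, and the Mackey computation $(\beta^G)_N=\beta(1)\,|G:PN|\,(1_{P_0})^N$, which matches the identity the paper uses. But the proof stops exactly where the theorem begins. The entire content of (iii)$\Rightarrow$(i) is the production, when $P_0\neq 1$ and $\rad{p}{N}=1$, of an irreducible constituent of $\beta^G$ vanishing at a $p$-element of $N$, and for this you offer only a programme (``analyse $F^*(N)$\dots invoke the known results\dots''), which you yourself flag as the main obstacle. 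That is a genuine gap, not a routine verification.

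Moreover, the specific device you propose is stronger than what one can arrange: you need a \emph{$G$-invariant} irreducible constituent $\varphi$ of $(1_{P_0})^N$ with $\varphi(x)=0$, precisely so that Clifford's theorem gives $\chi_N=e\varphi$ and the conjugates of $\varphi$ do not spoil the zero. The paper never obtains invariance. Instead it finds constituents $\eta$ of the relevant permutation character that vanish on a $G$-\emph{invariant subset}, so that $\chi$ restricted to the minimal normal subgroup, being a sum of $G$-conjugates of $\eta$, still vanishes there: for a non-abelian minimal normal subgroup $M$ with $p\mid |M|$ this is Proposition \ref{lemma:minimalsubgroup}, resting on the CFSG-dependent Lemma \ref{2.1MN} (either a $p$-defect zero character, which vanishes on \emph{all} nontrivial $p$-elements of $M$, or a constituent extending to $\op{Aut}(S)$, handled by Lemma \ref{BianchiLemma} and Gallagher); for the case $p\nmid |M|$ one takes a chief factor $K/M$ inside $P_0M$ and uses Lemma \ref{contradiction_lemma} to get $\theta\in\irr{M}$ with $\theta^K$ irreducible, hence vanishing on all of $K\smallsetminus M$. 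Even then, forcing the resulting $\chi$ to lie over $\beta$ is not automatic: the paper must extend $\theta$ to its inertia group and twist by the conjugate of a linear constituent $\lambda$ of $\hat{\theta}_{P_0\cap T}$ so that the induced character of $P_0M$ restricts to $P_0$ with $1_{P_0}$ as a constituent. None of this machinery is present in your sketch, so the argument as written does not close.
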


Indeed Theorem \ref{TEOB} generalises \cite[Theorem B]{MN} when $N=G$ (see Theorem \ref{MN}). Notice that, by Theorem \ref{MN} (i)-(ii), $P_0$ is normal in $N$ if and only if $\eta(x)\neq 0$ for all irreducible constituents $\eta$ of $(1_{P_0})^N$ and all $x\in P_0$; however this fact does not directly imply (ii) of Theorem \ref{TEOB}, nor vice versa. Further, the following equivalence, which is related to Theorem \ref{MN} (i)-(iii), is not true: $P_0$ is normal in $N$ if and only if $p$ does not divide $\chi(1)$ for all irreducible constituents of $(1_{P_0})^G$; it is enough to observe Example \ref{degree} (2).

As a consequence of Theorem \ref{teoA}, some features of a normal subgroup $N$ of a group $G$ can be obtained through the analysis of its $G$-conjugacy class sizes of elements which are zeros of some irreducible character of $G$; such an element is said to be \emph{vanishing} in $G$. Let $\pi$ be any set of primes, and let $x^G$ be the conjugacy class of $x$ in $G$. If $\abs{x^G}$ is a $\pi'$-number for every prime power order $\pi$-element $x$ in $N$, then by the main result of A. Beltrán et al. (see \cite[Theorem B]{BFM}), it is known that $N$ has a nilpotent Hall $\pi$-subgroup. In the next result, we show that we do not need to assume this condition for all prime power order $\pi$-elements in $N$, but for those which are vanishing in $G$. However, we need to assume the $\pi$-separabilty of $N$ to get that result.

\begin{theoremletters}
\label{teoC}
Let $N$ be a normal subgroup of a group $G$, and let $\pi$ be any set of prime numbers. 

\begin{enumerate}
	\item[\emph{(1)}] Suppose that $\abs{x^G}$ is a $\pi'$-number for every prime power order $\pi$-element $x \in N$ which is vanishing in $G$. If $N$ is $\pi$-separable, then $N/\rad{\pi'}{N}$ has a nilpotent normal Hall $\pi$-subgroup. In particular, the Hall $\pi$-subgroups of $N$ are nilpotent.
	
	\item[\emph{(2)}] Suppose that $\abs{x^G}$ is a $\pi$-number for every prime power order $\pi$-element $x \in N$ which is vanishing in $G$. If $\hall{\pi}{N}\neq \emptyset$, then $N$ has a normal Hall $\pi$-subgroup. 
	
Additionally, if all $\abs{x^G}$ are also $\pi$-numbers for the prime power order $\pi'$-elements $x\in N$ that are vanishing in $G$, then the Hall $\pi'$-subgroups of $N$ are nilpotent.
\end{enumerate}
\end{theoremletters}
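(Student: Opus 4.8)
The plan is to combine Theorem~\ref{teoA}, \cite[Theorem~B]{BFM} and elementary properties of $\pi$-separable groups, organising both parts around the following dichotomy applied to a prime $r$: either every $r$-element of $N$ is non-vanishing in $G$ — so Theorem~\ref{teoA} already yields a normal Sylow $r$-subgroup of $N$ — or some $r$-element of $N$ is vanishing in $G$, and then the hypothesis constrains its $G$-class size and the conjugacy-class machinery enters.

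\textit{Part~(1).} First I would reduce to the case $\rad{\pi'}{N}=1$. Setting $M=\rad{\pi'}{N}$ and passing to the pair $N/M\triangleleft G/M$: the group $N/M$ is $\pi$-separable with $\rad{\pi'}{N/M}=1$, and the hypothesis descends, because a prime-power-order $\pi$-element of $N/M$ lifts (as $M$ is a $\pi'$-group) to a prime-power-order element $x\in N$ of the same order, $x$ is vanishing in $G$ whenever its image is vanishing in $G/M$ (inflate a witnessing irreducible character), and the $(G/M)$-class size of that image divides $\abs{x^G}$. Since the conclusion of part~(1) is exactly that $N/M$ has a normal nilpotent Hall $\pi$-subgroup, we may assume $\rad{\pi'}{N}=1$; then $\ce{N}{\rad{\pi}{N}}\le\rad{\pi}{N}$ and $N$ has Hall $\pi$-subgroups. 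It now suffices to prove that $N$ has a normal Sylow $p$-subgroup for every $p\in\pi$: granting this, the pairwise-commuting normal subgroups $\rad{p}{N}$ $(p\in\pi)$ have product $\prod_{p\in\pi}\rad{p}{N}$, a normal nilpotent subgroup of order $\prod_{p\in\pi}\abs{N}_p=\abs{N}_\pi$, hence a normal nilpotent Hall $\pi$-subgroup (namely $\rad{\pi}{N}$).

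So fix $p\in\pi$. If every $p$-element of $N$ is non-vanishing in $G$ we are done by Theorem~\ref{teoA}. Otherwise some $p$-element of $N$ is vanishing in $G$ and, by hypothesis, its $G$-class size is a $\pi'$-number, in particular coprime to $p$. Here I would argue by induction on $\abs{G}+\abs{N}$: invoking Theorem~\ref{teoA} to settle the primes $q\in\pi$ all of whose $q$-elements in $N$ are non-vanishing, and passing to suitable sections of $G$, one reduces to the case where the hypothesis of \cite[Theorem~B]{BFM} holds for \emph{all} prime-power-order $\pi$-elements of $N$; that result then supplies a nilpotent Hall $\pi$-subgroup, and one finally promotes nilpotency to normality using the $\pi$-separability of $N$ together with $\ce{N}{\rad{\pi}{N}}\le\rad{\pi}{N}$. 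This reconciliation step — Theorem~\ref{teoA} needs \emph{every} $p$-element to be non-vanishing, \cite[Theorem~B]{BFM} needs the class-size bound for \emph{every} prime-power-order $\pi$-element, while the hypothesis controls only the vanishing ones — is the main obstacle and the technical heart of the proof. The ``in particular'' clause then follows at once, a Hall $\pi$-subgroup of $N$ mapping isomorphically onto the normal nilpotent Hall $\pi$-subgroup of $N/\rad{\pi'}{N}$.

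\textit{Part~(2).} The same scheme applies. For each $p\in\pi$, either all $p$-elements of $N$ are non-vanishing in $G$ (Theorem~\ref{teoA} gives a normal Sylow $p$-subgroup) or some $p$-element is vanishing with $G$-class size a $\pi$-number, and a dual reduction exploiting $\hall{\pi}{N}\neq\emptyset$ again produces a normal Sylow $p$-subgroup; their product $K=\prod_{p\in\pi}\rad{p}{N}$ is then a normal Hall $\pi$-subgroup of $N$. For the additional statement, fix a Hall $\pi'$-complement $L$, so that $N=K\rtimes L$. If $q\in\pi'$ and $x\in L$ is a $q$-element, then $x$ is a $\pi'$-element acting coprimely on the normal $\pi$-subgroup $K$, so $\ce{N}{x}=\ce{K}{x}\rtimes\ce{L}{x}$, whence $\abs{x^L}$ divides $\abs{x^N}$ and hence $\abs{x^G}$; if in addition $x$ is vanishing in $G$, then $\abs{x^G}$ is a $\pi$-number while $\abs{x^L}$ divides $\abs{L}$, a $\pi'$-number, forcing $x\in\ze{L}$. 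Thus every vanishing-in-$G$ prime-power-order element of a Hall $\pi'$-complement is central in that complement — in particular non-vanishing inside it — and, feeding this back into the dichotomy and arguing inductively as in Part~(1), one obtains a normal Sylow $q$-subgroup of $L$ for every $q\in\pi'$; therefore $L=\prod_{q\in\pi'}\rad{q}{N}$ is nilpotent. As before, the delicate point is the reduction that makes the class-size information usable simultaneously for all the relevant prime-power-order elements.
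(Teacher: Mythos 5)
There is a genuine gap, and it sits exactly where you flag ``the main obstacle and the technical heart of the proof.'' Your skeleton (reduce to $\rad{\pi'}{N}=1$, treat one prime $p\in\pi$ at a time, invoke Theorem~\ref{teoA} when no $p$-element of $N$ is vanishing in $G$) agrees with the paper, but for a prime $p$ that \emph{does} admit vanishing $p$-elements you only gesture at an induction that ``reduces to the case where the hypothesis of \cite[Theorem~B]{BFM} holds for all prime-power-order $\pi$-elements.'' No such reduction is given, and none is plausible: the whole point of the theorem is that the class-size condition is imposed only on vanishing elements, and there is no way to manufacture it for the non-vanishing ones; moreover \cite[Theorem~B]{BFM} would at best give nilpotency, and your proposed promotion to normality via $\ce{N}{\rad{\pi}{N}}\leqslant\rad{\pi}{N}$ does not follow for a nilpotent (non-abelian, multi-prime) Hall subgroup. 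The paper's actual mechanism is Proposition~\ref{in_fitting} (applied with $\pi'$ in place of $\pi$, which is where the $\pi$-separability of $N$ is used): since $\abs{x^G}$ is a $\pi'$-number and $\rad{\pi'}{N}=1$, every vanishing $p$-element $x\in N$ satisfies $[x^G,x^G]=1$ and hence lies in $\ze{\fit{N}}$, so in $Z_p:=\ze{\rad{p}{N}}$. Then $N/Z_p\unlhd G/Z_p$ has \emph{no} vanishing $p$-elements at all, Theorem~\ref{teoA} applies to the quotient, and the normal Sylow $p$-subgroup pulls back because $Z_p$ is a $p$-group. This ``push the bad elements into a normal $p$-subgroup and quotient it out'' step is what your proposal is missing, and without it Part~(1) is not a proof.

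Part~(2) has the same defect in milder form: the ``dual reduction'' you invoke is never specified, though here it is easy --- after assuming $\rad{\pi}{N}=1$, Lemma~\ref{wielandt} forces every vanishing $p$-element ($p\in\pi$) into $\rad{\pi}{N}=1$, so Theorem~\ref{teoA} applies directly. Your computation showing that a vanishing prime-power-order $\pi'$-element of a complement $L$ is central in $L$ is correct, but it does not close the argument: $L$ is not normal in $G$, so neither Theorem~\ref{teoA} nor Theorem~\ref{carac} can be applied to $L$ with the vanishing data you have (non-vanishing in $G$ does not imply non-vanishing in $L$, and centrality in $L$ says nothing about vanishing in $G/K$ a priori). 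The paper instead passes to $N/H\unlhd G/H$ (with $H$ the normal Hall $\pi$-subgroup just constructed), observes that the relevant $G/H$-class sizes are $\pi$-numbers, i.e.\ $(\pi')'$-numbers, and applies Part~(1) with $\pi$ and $\pi'$ interchanged --- which again routes through the Proposition~\ref{in_fitting} step you omit.
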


We do not know whether the $\pi$-separability condition on $N$ in (1) can be weakened in order to obtain the nilpotency of its Hall $\pi$-subgroups. What is certainly true is that this condition is necessary for the normality of the Hall $\pi$-subgroup of $N/\rad{\pi'}{N}$, as Example \ref{pi-separa} shows. Additionally, the statement (2) above extends for a set of primes the following result proved in \cite{BQ}: If a prime $p$ does not divide any conjugacy class size of a vanishing $p'$-element $x$ of prime power order of a group $G$, then $G$ has a normal $p$-complement. We do not know whether the assumption $\hall{\pi}{N}\neq \emptyset$ in (2) can be avoided.

Finally, we investigate in Theorem \ref{teoE} the structure of $N$ when the $G$-conjugacy class lenghts of the considered vanishing elements in $G$ are prime powers. As a consequence of this study, when $N=G$ we obtain the next result.

\begin{theoremletters}
\label{cor_nilpotent}
Let $G$ be a group. Assume that $\abs{x^G}$ is a prime power for every vanishing element $x$ of $G$ of prime power order. Then $G'$ is nilpotent.
\end{theoremletters}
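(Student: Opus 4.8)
The plan is to deduce this statement from the general structural result Theorem~\ref{teoE} applied with $N = G$, and then to run the short additional argument that the case $N = G$ allows. Either invoking the solvability of $G$ that Theorem~\ref{teoE} (with $N = G$) provides, or arguing directly by a minimal counterexample together with the known behaviour of zeros of irreducible characters in almost simple groups, one reduces to the case in which $G$ is solvable. Then the conclusions ``$G'$ nilpotent'', ``$G' \le \fit{G}$'' and ``$G/\fit{G}$ abelian'' are equivalent, and the last of these amounts to: for every prime $p$, the Sylow $p$-subgroups of $G'$ are normal in $G'$. I would prove this prime by prime.

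Fix a prime $p$. Since $\rad{p}{G'} = G' \cap \rad{p}{G}$ is normal in $G$ and inflation preserves zeros, the hypotheses pass to $G/\rad{p}{G}$, so I may assume $\rad{p}{G} = 1$; then $\rad{p}{G'} \le \rad{p}{G} = 1$ as well, and the goal becomes $p \nmid |G'|$. Suppose not. A Sylow $p$-subgroup of $G'$ is then nontrivial, and if it were normal in $G'$ we would be done, so assume it is not; by Theorem~\ref{teoA} applied to the normal subgroup $G'$ (in contrapositive form) there is a vanishing $p$-element $1 \ne x \in G'$, and by hypothesis $|x^G| = q^{a}$ for a prime $q$. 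Any $p$-element of $G$ whose $G$-class has $p$-power size lies in $\rad{p}{G} = 1$, so $q \ne p$ and $a \ge 1$. Hence $\ce{G}{x}$ has index a power of $q$ and therefore contains a full Sylow $p$-subgroup $P$ of $G$; since $x$ is a $p$-element centralising $P$, the subgroup $P\langle x \rangle$ is a $p$-group containing $P$, hence equals $P$, so $x \in \ze{P} \cap G'$.

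It remains to rule out this configuration, and this is the heart of the proof and the place where the detailed analysis behind Theorem~\ref{teoE} is needed: one exploits that $\ce{G}{x}$ contains $P$ together with a Hall $\{p, r\}$-subgroup of $G$ for every prime $r \ne q$, and that the existence of the vanishing element $x$ forces further zeros on $\irr{G}$, in order to contradict the prime-power hypothesis. The genuinely delicate situation is $p = 2$, where neither the containment in $\rad{2}{G}$ nor the Isaacs--Navarro--Wolf restriction on odd-order non-vanishing elements \cite{INW} is available, and one instead has to argue through the coprime action of $P$ on the $q$-part of a suitable section. Once all Sylow subgroups of $G'$ are shown to be normal in $G'$ we conclude that $G'$ is nilpotent. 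The main obstacle is exactly this last step --- transferring the hypothesis on $G$-class sizes of vanishing elements into control of $G'$ at the prime $2$ --- which is the technically heaviest part of the argument and the reason $\pi$-separability-type assumptions enter the companion results.
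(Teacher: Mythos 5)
There is a genuine gap: your argument stops precisely at the step that constitutes the proof. After a correct reduction (quotienting by $\rad{p}{G}$, invoking Theorem~\ref{teoA} in contrapositive to produce a vanishing $p$-element $1\neq x\in G'$, and using Lemma~\ref{wielandt} to see that $\abs{x^G}=q^a$ with $q\neq p$, whence $x\in\ze{P}\cap G'$), you announce that ``it remains to rule out this configuration'' and describe in general terms what ``one exploits'', without carrying out any argument. No contradiction is ever derived, so the theorem is not proved. Worse, the configuration you reach is genuinely harder to refute than the one the paper handles: the natural tool for the endgame is Proposition~\ref{brough} (a vanishing element lying in a non-trivial normal $p$-subgroup has class size divisible by $p$), applied in a quotient $\overline{G}$ where the image of $x$ has become a normal-$p$-subgroup element. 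But that requires $\overline{x}$ to be vanishing \emph{in the quotient}, and vanishing does not pass from $G$ to $G/M$; your $x$ is only known to be vanishing in $G$. The paper avoids this by running the argument in the opposite direction: it starts from a prime power order element of $G/\fit{G}$ that is vanishing \emph{in $G/\fit{G}$}, lifts it to a vanishing element of $G$ outside $\fit{G}$, uses Lemma~\ref{berkokazarin} (via Proposition~\ref{in_fitting}) to force the image into $\rad{q}{G/\fit{G}}$, and then Proposition~\ref{brough} gives the contradiction.

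The second structural difference is the final step. Your plan is to prove, prime by prime, that $G'$ has normal Sylow subgroups via Theorem~\ref{teoA}; the paper instead concludes that $G/\fit{G}$ has \emph{no} prime power order vanishing elements at all and then applies Theorem~\ref{carac} (the CFSG-dependent Malle--Navarro--Olsson result that a group all of whose prime power order elements are non-vanishing is abelian) to get $G/\fit{G}$ abelian, i.e.\ $G'\leqslant\fit{G}$. You never invoke Theorem~\ref{carac} or Proposition~\ref{brough}, and the auxiliary observations you do make ($\ce{G}{x}$ contains a Sylow $p$-subgroup, the case $p=2$ being ``delicate'', Hall $\{p,r\}$-subgroups) play no role in the paper's proof and do not obviously lead anywhere. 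To repair your write-up along the paper's lines: take a prime power order element $\overline{x}\in\van{G/\fit{G}}$, lift it to $x\in\van{G}\smallsetminus\fit{G}$, and run the chain Lemma~\ref{wielandt} $\to$ Proposition~\ref{in_fitting} $\to$ Proposition~\ref{brough} to contradict $\abs{x^G}$ being a prime power; then finish with Theorem~\ref{carac}.
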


Other new interesting consequences arise from our contributions in the trivial case $N=G$ (see Section \ref{secCor}).

%%%%%%%%%%%%%%%%%%%%%%%%%%%%%%%%%%%%%%%%%%%%%%%%%%%%%%%%%%%%%%%%%%%%%%%%%%%%%%%%%%%%%%%%%%%%%%%%

\section{Preliminaries}

The notation and terminology here is as follows. In the sequel, $p$ will be always a prime, and $\pi$ will denote a set of primes. The set of prime divisors of the order of $G$ is $\pi(G)$. As usual, the set of all Sylow $p$-subgroups of $G$ is denoted by $\syl{p}{G}$, and $\hall{\pi}{G}$ is the set of all Hall $\pi$-subgroups of $G$. We write $\op{Irr}(G)$ for the set of all irreducible complex characters of $G$. The set of vanishing elements of a group $G$ will be denoted by $\van{G}$. CFSG means classification of finite simple groups. The remaining notation and terminology is standard in the framework of finite group theory, and we refer to the book \cite{ISA} for details about character theory.

We gather some significant results for locating vanishing elements in a given group. As mentioned in the Introduction, it is elementary to see that a group is abelian if and only if every element is non-vanishing. In fact, this characterisation remains true, via the CFSG, when only prime power order elements are involved.

\begin{theorem}
\label{carac}
A group $G$ is abelian if and only if every prime power order element is non-vanishing in $G$.
\end{theorem}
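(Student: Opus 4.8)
The plan is to prove the non-trivial direction: if every prime power order element of $G$ is non-vanishing, then $G$ is abelian. I would argue by contradiction, taking $G$ to be a counterexample of minimal order. The first observation is that the hypothesis passes to quotients: if $N \trianglelefteq G$ and $xN$ is a prime power order element of $G/N$, then some prime power order element of $G$ maps onto a power of $xN$ that generates $\langle xN\rangle$, and every irreducible character of $G/N$ inflates to one of $G$, so $xN$ is non-vanishing in $G/N$. Hence by minimality every proper quotient of $G$ is abelian, so $G' $ is the unique minimal normal subgroup of $G$ and $G'$ is either an elementary abelian $r$-group or a direct product of copies of a non-abelian simple group.

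The soluble case I would dispatch using the Isaacs--Navarro--Wolf machinery quoted in the Introduction: in a soluble group every non-vanishing element of odd order lies in $\fit{G}$, and more generally $g\,\fit{G}$ is a $2$-element. Since by hypothesis every element of prime power order is non-vanishing, every $r$-element for odd $r$ lies in $\fit{G}$, and every $2$-element $g$ satisfies $g\fit G \in G/\fit G$ being a $2$-element — but as $G/\fit G$ is abelian here and $G/G'$ would have to absorb everything, one pushes to conclude $G=\fit G$ is nilpotent; then by the nilpotent case of \cite{INW} (non-vanishing $\iff$ central) every prime power order element, hence every element, is central, so $G$ is abelian, a contradiction. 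The harder case is when $G' = S_1 \times \cdots \times S_k$ is a product of non-abelian simple groups. Here I would invoke the CFSG in the form: every non-abelian simple group $S$ has an element of prime power order that is a zero of some $\chi \in \irr{S}$ (indeed one can find such in many ways, e.g. via blocks of defect zero or explicit character-table inspection across the families); more is needed, namely a prime power order vanishing element of the full group $G$, so one must control how characters of $G$ restrict to $G' = \operatorname{soc}(G)$ and use Clifford theory together with the structure of $\operatorname{Aut}(S)$ acting on the factors. The key point is that a prime power order zero of an $\operatorname{Aut}(S)$-invariant (or suitably chosen) irreducible character of $S^k$ extends or induces to a character of $G$ on which the corresponding element still vanishes, contradicting the hypothesis.

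The main obstacle I anticipate is precisely this last step: producing, for an arbitrary almost-simple-type group (more precisely a group $G$ with $\operatorname{soc}(G) = S^k$ and $G/\operatorname{soc}(G)$ acting faithfully), a single element of prime power order that vanishes on some irreducible character of the whole group $G$, not merely on some character of the socle. This requires a CFSG-based case analysis of vanishing prime power order elements in almost simple groups — the kind of statement that is folklore but whose proof genuinely uses the classification. I would structure it by first reducing (via Clifford theory and the fact that characters of $S^k$ are tensor products) to the case $k=1$, i.e. $\operatorname{soc}(G)=S$ simple and $S \le G \le \operatorname{Aut}(S)$, and then quote or establish the almost-simple case: such a $G$ always has a vanishing element of prime power order. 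Assembling these pieces yields the contradiction and completes the proof.
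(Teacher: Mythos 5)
The paper's proof is a single citation: by \cite[Theorem B]{MNO}, every non-linear irreducible character of a finite group vanishes on some element of prime power order, so a non-abelian group always has a vanishing prime power order element; together with Burnside's theorem this is the whole argument. Your plan instead sets out to reprove that fact from scratch, and as written it has two genuine gaps. First, in the soluble case: the Isaacs--Navarro--Wolf theorem you invoke only says that $g\fit{G}$ is a $2$-element for every non-vanishing $g$. This does place all odd prime power order non-vanishing elements inside $\fit{G}$ and hence makes $G/\fit{G}$ a $2$-group (abelian, in your minimal counterexample), but it is completely vacuous for a $2$-element $g$ lying outside $\fit{G}$ --- such an element automatically satisfies ``$g\fit{G}$ is a $2$-element''. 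So the step ``one pushes to conclude $G=\fit{G}$ is nilpotent'' does not follow from anything you have quoted; ruling out a non-trivial abelian $2$-group on top of $\fit{G}$ is precisely where the work lies, and your sketch gives no mechanism for it.

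Second, the non-soluble case is deferred to the assertion that every group $G$ with $\op{soc}(G)=S^k$ ($S$ non-abelian simple, faithful action) has a prime power order element vanishing on some character of $G$ itself, to be ``quoted or established'' by a CFSG case analysis. That assertion, in the form you need it (the zero must live on an irreducible character of the whole group, which is why one needs characters of $S$ that extend to $\op{Aut}(S)$ or have $p$-defect zero, as in Lemmas \ref{BianchiLemma}--\ref{2.1MN} of the paper), is essentially the content of the Malle--Navarro--Olsson theorem. Declaring it folklore and postponing it means the core of the proof is named rather than given. The surrounding reduction framework (hypothesis inherited by quotients, uniqueness of the minimal normal subgroup $G'$, $\ce{G}{G'}=1$ in the non-soluble case) is sound, but neither of the two cases it reduces to is actually closed. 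The efficient route is simply to cite \cite[Theorem B]{MNO}, as the paper does.
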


\begin{proof}
This is a direct application of \cite[Theorem B]{MNO}, which asserts that a non-linear irreducible character vanishes on some prime power order element.
\end{proof}

\begin{example}
\label{suz}
Concerning the above theorem it is worth noting that, in general, it is not true that a normal subgroup $N$ is abelian if and only if every element of $N$ is non-vanishing in $G$, i.e. if $N\cap \van{G}=\emptyset$:

On the one hand, if $G=Q_8$ is a quaternion group of $8$ elements and $N$ is a normal subgroup of $G$ isomorphic to a cyclic group of order $4$, then $N$ is abelian and $N\cap\van{G}\neq \emptyset$. On the other hand, by \cite[Theorem 5.1]{INW}, for any prime $p$ there exists a group $G$ having a normal non-abelian Sylow $p$-subgroup, and every $p$-element of $G$ is non-vanishing.
\end{example}

\begin{proposition}\emph{\cite[Theorem B]{INW}}
\label{nilp}
$G\smallsetminus\ze{G}= \van{G}$ for any nilpotent group $G$.
\end{proposition}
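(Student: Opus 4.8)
The plan is to prove the two inclusions separately; the inclusion $\van{G}\subseteq G\smallsetminus\ze{G}$ in fact holds for every finite group. First I would observe that if $g\in\ze{G}$ and $\rho$ is a representation affording $\chi\in\irr{G}$, then $\rho(g)$ is a scalar matrix of finite multiplicative order, so $\abs{\chi(g)}=\chi(1)\neq 0$; hence no central element is vanishing, that is, $\van{G}\subseteq G\smallsetminus\ze{G}$. The real content is the reverse inclusion when $G$ is nilpotent: every non-central element must be a zero of some irreducible character.

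Next I would reduce to $p$-groups. Writing the nilpotent group $G$ as the direct product $P_1\times\cdots\times P_k$ of its Sylow subgroups, one has $\ze{G}=\ze{P_1}\times\cdots\times\ze{P_k}$ and $\irr{G}=\{\chi_1\times\cdots\times\chi_k:\chi_i\in\irr{P_i}\}$ with $(\chi_1\times\cdots\times\chi_k)(g_1,\ldots,g_k)=\prod_i\chi_i(g_i)$. Hence an element $g=(g_1,\ldots,g_k)$ is vanishing in $G$ precisely when some coordinate $g_i$ is vanishing in $P_i$, while $g\notin\ze{G}$ precisely when some $g_i\notin\ze{P_i}$; so the statement for nilpotent groups follows once it is known for $p$-groups.

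For a $p$-group $P$ I would argue by induction on $\abs{P}$. Let $g\in P\smallsetminus\ze{P}$, so that $[g,P]\neq 1$. If there is a central subgroup $Z$ of order $p$ with $[g,P]\not\le Z$, then $gZ$ is non-central in $P/Z$, so by induction some $\overline{\chi}\in\irr{P/Z}$ vanishes at $gZ$, and its inflation to $P$ vanishes at $g$. Otherwise every central subgroup of order $p$ contains the nontrivial group $[g,P]$, so there is only one such subgroup; hence $\ze{P}$ is cyclic and $[g,P]=\Omega_1(\ze{P})=:Z_0$. Now choose $\chi\in\irr{P}$ with $Z_0\not\le\ker\chi$ — possible since $\bigcap_{\chi\in\irr{P}}\ker\chi=1$ — and let $\rho$ afford $\chi$. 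For every $x\in P$ one has $x^{-1}gx=g[g,x]$ with $[g,x]\in Z_0\le\ze{P}$, so $\rho([g,x])$ is a scalar matrix; writing $\rho(z_0)=\zeta I$ for a generator $z_0$ of $Z_0$ and a primitive $p$-th root of unity $\zeta$, and $[g,x]=z_0^{k}$, we obtain $\rho(x)^{-1}\rho(g)\rho(x)=\zeta^{k}\rho(g)$ and hence $\chi(g)=\zeta^{k}\chi(g)$ on taking traces. Choosing $x$ with $[g,x]\neq 1$ forces $\zeta^{k}\neq 1$, so $\chi(g)=0$, which closes the induction.

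The step I expect to be the main obstacle is the second branch of the induction: quotienting by a minimal central subgroup fails exactly when $g$ becomes central modulo every such subgroup, and one must recognise that this situation forces $\ze{P}$ to be cyclic with $[g,P]$ equal to its unique minimal subgroup, after which the commutator–scalar computation produces the desired zero. The remaining ingredients — the easy inclusion, the direct-product reduction, and the inflation step — are routine.
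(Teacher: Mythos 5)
Your proposal is correct. Note, however, that the paper offers no proof of this proposition at all: it is stated as a quotation of \cite[Theorem B]{INW}, so there is no internal argument to compare against. What you have written is a complete, self-contained proof along the classical lines of the Isaacs--Navarro--Wolf argument. All the steps check out: the easy inclusion $\van{G}\subseteq G\smallsetminus\ze{G}$ via Schur's lemma is valid for arbitrary finite groups; the reduction to $p$-groups via $\irr{P_1\times\cdots\times P_k}=\{\chi_1\times\cdots\times\chi_k\}$ is standard and correctly identifies both $\ze{G}$ and $\van{G}$ coordinatewise; and the induction for $p$-groups is handled properly. In particular, the delicate branch is treated correctly: if no minimal central subgroup $Z$ satisfies $[g,P]\not\leqslant Z$, then every such $Z$ contains the non-trivial subgroup $[g,P]$, forcing $Z$ to be unique, $\ze{P}$ to be cyclic, and $[g,P]$ to equal its unique minimal subgroup $Z_0\leqslant\ze{P}$; the identity $\chi(g)=\zeta^{k}\chi(g)$ with $\zeta^{k}\neq 1$ for a character $\chi$ not containing $Z_0$ in its kernel then yields $\chi(g)=0$. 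The only gain of citing \cite{INW} over reproving the statement is brevity; your argument has the advantage of being elementary and not relying on any external input.
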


Observe that if a normal subgroup $N$ is nilpotent, then $N\smallsetminus\ze{G}$ may not coincide with $\van{G}\cap N$. For instance, one can consider as $G$ the normaliser in a Suzuki group of degree 8 of a Sylow $2$-subgroup of it, and $N$ the Sylow $2$-subgroup. It holds that $\van{G}\cap N=\emptyset$ although clearly $N\smallsetminus\ze{G}\neq \emptyset$.

\begin{lemma}\emph{\cite[Corollary 1.3]{G}}
\label{gru}
Let $H$ be a subgroup of a group $G$. Assume that $G=H\ce{G}{x}$ for some $x\in H$. Then $x\in \van{G}$ if and only if $x\in \van{H}$.
\end{lemma}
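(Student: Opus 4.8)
The plan is to translate ``non-vanishing'' into an invertibility statement inside the complex group algebra, where the hypothesis $G=H\ce{G}{x}$ becomes transparent. The first observation is that $G=H\ce{G}{x}$ in fact forces $x^{G}=x^{H}$, not merely $x^{G}\cap H=x^{H}$: from the set-theoretic identity $|H\ce{G}{x}|=|H|\,|\ce{G}{x}|/|H\cap\ce{G}{x}|$ together with $H\cap\ce{G}{x}=\ce{H}{x}$ one gets $|x^{G}|=[G:\ce{G}{x}]=[H:\ce{H}{x}]=|x^{H}|$, and since $x^{H}\subseteq x^{G}$ with both finite, they coincide. Consequently the class sum $a:=\sum_{y\in x^{G}}y=\sum_{y\in x^{H}}y$ is at the same time the class sum of $x$ in $\mathbb{C}H$ and in $\mathbb{C}G$; it is central in both algebras, and it lies in the subalgebra $\mathbb{C}H\subseteq\mathbb{C}G$.

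Next I would invoke the standard fact that, for any finite group $K$ and any $y\in K$, the class sum $\sum_{z\in y^{K}}z$ acts on the simple $\mathbb{C}K$-module affording $\chi\in\irr{K}$ as the scalar $|y^{K}|\chi(y)/\chi(1)$. Through the Wedderburn decomposition $\mathbb{C}K\cong\prod_{\chi\in\irr{K}}M_{\chi(1)}(\mathbb{C})$, this central element is invertible in $\mathbb{C}K$ exactly when all of these scalars are nonzero, that is, exactly when $y\notin\van{K}$. Applying this to $K=G$ and to $K=H$ for the single element $a$ above, the lemma reduces to the purely ring-theoretic claim that $a$ is invertible in $\mathbb{C}G$ if and only if $a$ is invertible in $\mathbb{C}H$.

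The remaining implication (the other being trivial) is that invertibility of $a$ in the larger algebra descends to the finite-dimensional subalgebra $\mathbb{C}H$: if $a$ is a unit in $\mathbb{C}G$ it is not a zero divisor there, hence not a zero divisor in $\mathbb{C}H$, so left and right multiplication by $a$ on the finite-dimensional space $\mathbb{C}H$ are injective and therefore bijective, which makes $a$ a unit in $\mathbb{C}H$. Chaining the three steps gives $x\notin\van{G}\iff x\notin\van{H}$, hence the stated equivalence. The crux of the argument is the initial reduction $x^{G}=x^{H}$, which is what allows the \emph{same} algebra element to govern non-vanishing in both groups; a direct character-theoretic attack through the induction identity $\varphi^{G}(x)=\big(|\ce{G}{x}|/|\ce{H}{x}|\big)\varphi(x)$ bogs down in possible cancellations among the irreducible constituents of a restricted or induced character, and it is precisely the group-algebra reformulation that bypasses this difficulty.
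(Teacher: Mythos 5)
Your proof is correct. Note that the paper does not actually prove this lemma --- it is quoted verbatim from Gr\"uninger \cite[Corollary 1.3]{G} --- so there is no internal proof to compare against; but your argument is a sound, self-contained one, and it rests on the same mechanism as the cited source: the identity $x^G=x^H$ forced by $G=H\ce{G}{x}$, combined with the action of the common class sum via central characters. Gr\"uninger packages this as the ratio identity $\chi(x)/\chi(1)=\psi(x)/\psi(1)$ for every irreducible constituent $\psi$ of $\chi_H$ (obtained by letting the class sum act on the $\psi$-isotypic components of the module affording $\chi$, using $|x^G|=|x^H|$), whereas you package it as invertibility of the class sum in $\mathbb{C}G$ versus $\mathbb{C}H$; the descent of invertibility to the finite-dimensional subalgebra via the non-zero-divisor argument is a clean way to close the loop, and all steps check out.
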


The four lemmas below are crucial for proving Theorem \ref{teoB}, and the last two use the CFSG.

\begin{lemma}\emph{\cite[Lemma 5]{BCLP}}
\label{BianchiLemma}
Let $N$ be a minimal normal subgroup of $G$ so that $N = S_1 \times \cdots \times S_t$, where $S_i$ is isomorphic to $S$, a non-abelian simple group. If $\sigma\in\irr{S}$ extends to $\op{Aut}(S)$, then $\sigma \times \cdots \times \sigma \in \irr{N}$ extends to $G$.
\end{lemma}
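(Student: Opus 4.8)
The plan is to realise $G/\ce{G}{N}$ as a subgroup of a wreath product and to produce the desired extension by first climbing all the way up to that wreath product. Write $\theta:=\sigma\times\cdots\times\sigma\in\irr{N}$. Since each $S_i$ is non-abelian simple and all are isomorphic to $S$, the subgroups $S_1,\dots,S_t$ are exactly the minimal normal subgroups of $N$ (any minimal normal subgroup $M$ of $N$ satisfies $[M,S_i]\le M\cap S_i\in\{1,S_i\}$ for each $i$, and $\ze{N}=1$ forces $M=S_i$ for some $i$); hence every automorphism of $N$ permutes them, and one gets the standard identification $\op{Aut}(N)\cong\op{Aut}(S)\wr S_t=:W$, under which $\op{Inn}(N)$ is the ``diagonal'' copy $\op{Inn}(S)^t\cong S^t$ inside the base group $\op{Aut}(S)^t$. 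As $\ze{N}=1$, conjugation embeds $\overline G:=G/\ce{G}{N}$ into $W$ so that the normal subgroup $\overline N:=N\ce{G}{N}/\ce{G}{N}\cong N$ maps isomorphically onto $\op{Inn}(N)$. Consequently it suffices to (a) extend $\theta$ to all of $W$, (b) restrict that extension to $\overline G$, and (c) inflate the result back along $G\twoheadrightarrow\overline G$.

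For step (a): by hypothesis $\sigma$ admits an extension $\widehat\sigma\in\irr{\op{Aut}(S)}$, so $\widehat\sigma\times\cdots\times\widehat\sigma\in\irr{\op{Aut}(S)^t}$ extends $\theta$ to the base group of $W$, and it is visibly invariant under the $S_t$-action permuting the factors. The key input is the standard fact that a character of the shape $\psi\times\cdots\times\psi$ of a base group $H^t$ always extends to the full wreath product $H\wr S_t$ (the ``canonical extension'' in wreath products); applied with $H=\op{Aut}(S)$ and $\psi=\widehat\sigma$ this yields an extension $\widetilde\theta\in\irr{W}$ of $\theta$. Concretely, if $\mathcal D$ is a representation of $\op{Aut}(S)$ affording $\widehat\sigma$ on a space $V$, one checks directly that $(a_1,\dots,a_t;\pi)\mapsto\bigl(v_1\otimes\cdots\otimes v_t\mapsto \mathcal D(a_1)v_{\pi^{-1}(1)}\otimes\cdots\otimes\mathcal D(a_t)v_{\pi^{-1}(t)}\bigr)$ is a representation of $W$ on $V^{\otimes t}$, and its restriction to the diagonal copy of $S^t$ affords $\theta$.

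For steps (b)–(c): restrict $\widetilde\theta$ to $\overline G\le W$. Its further restriction to $\overline N$ is (via the identification $\overline N\cong N$) equal to $\theta$, which is irreducible, so $\widetilde\theta|_{\overline G}\in\irr{\overline G}$ is itself an irreducible extension of $\theta$ to $\overline G$. Inflating along $G\twoheadrightarrow\overline G=G/\ce{G}{N}$ gives $\chi\in\irr{G}$; and for $x\in N$ one has $\chi(x)=\widetilde\theta(\overline x)=\theta(x)$, where $\overline x\in\op{Inn}(N)$ is the image of $x$. Thus $\chi$ extends $\sigma\times\cdots\times\sigma$ to $G$, as desired.

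The only genuinely non-formal ingredient is the wreath-product extension used in step (a) — either one quotes the canonical-extension lemma for wreath products from the literature, or one checks by hand that the tensor-with-permutation formula above respects the wreath-product multiplication and restricts correctly to the diagonal $S^t$. Everything else is bookkeeping: identifying $\op{Aut}(S^t)$ with $\op{Aut}(S)\wr S_t$ and tracking how $N$, $\op{Inn}(N)$ and $\ce{G}{N}$ sit in the picture. I expect the main care to be needed precisely in that verification (or in locating a clean reference for it).
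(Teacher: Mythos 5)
The paper gives no proof of this lemma --- it is quoted verbatim from \cite[Lemma 5]{BCLP} --- and your argument is a correct, self-contained reconstruction along essentially the same lines as that source: embed $G/\ce{G}{N}$ into $\op{Aut}(N)\cong\op{Aut}(S)\wr S_t$, extend $\sigma\times\cdots\times\sigma$ canonically to the wreath product via the tensor-with-permutation representation, restrict and inflate. The verification of the wreath-product homomorphism and the irreducibility of the restriction both check out; the only nitpick is that $\op{Inn}(N)=\op{Inn}(S)^t$ is the full direct product inside the base group, not a ``diagonal'' copy.
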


\begin{lemma}\emph{\cite[Lemma 2.2]{MN}}
\label{2.2MN}
Let $G$ be a finite group, $p$ a prime, and $P\in\syl{p}{G}$. If $\chi\in\irr{G}$ has $p$-defect zero, then $\chi$ is a constituent of $(1_P)^G$ and vanishes on the non-trivial $p$-elements of $G$.
\end{lemma}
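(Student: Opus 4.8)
The lemma has a single substantive ingredient, from which both assertions drop out by bookkeeping. \emph{Claim:} if $\chi\in\irr{G}$ has $p$-defect zero, i.e. $\chi(1)_p=|G|_p$, then its restriction $\chi_P$ to a Sylow $p$-subgroup $P$ is a multiple of the regular character $\rho_P$ of $P$; precisely $\chi_P=m\,\rho_P$ with $m=\chi(1)/|P|\in\mathbb Z$ (an integer because $|P|=|G|_p$ divides $\chi(1)$). Granting the claim, $\chi(x)=m\,\rho_P(x)=0$ for every $1\neq x\in P$, and every non-trivial $p$-element of $G$ is conjugate into $P$, so $\chi$ vanishes on all of them; moreover, by Frobenius reciprocity the multiplicity of $\chi$ in $(1_P)^G$ is $\langle 1_P,\chi_P\rangle=m\langle 1_P,\rho_P\rangle=m\ge 1$, so $\chi$ is a constituent of $(1_P)^G$. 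The degenerate case $p\nmid|G|$ needs no argument: then $P=1$, every character has $p$-defect zero, $(1_P)^G$ is the regular character of $G$, and $G$ has no non-trivial $p$-element.

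To prove the claim I would reduce modulo $p$. Fix a $p$-modular system $(K,\mathcal O,k)$ with $K$ large enough for $G$, let $L$ be an $\mathcal OG$-lattice affording $\chi$, and set $\bar L=L/\mathfrak p L$, a $kG$-module with $\dim_k\bar L=\chi(1)$. The hypothesis $\chi(1)_p=|G|_p$ is exactly the statement that the $p$-block $B$ of $\chi$ has defect zero; then $B=\{\chi\}$ and the block algebra $kGe_B$ is semisimple. Since the block idempotent acts as the identity on $L$, it does so on $\bar L$, so $\bar L$ is a module over the semisimple algebra $kGe_B$, hence projective over $kGe_B$ and therefore a projective $kG$-module. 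Restricting to $P$: because $kG$ is free of rank $[G:P]$ over $kP$, the module $\bar L_P$ is a projective $kP$-module, and $kP$ being local, projective $kP$-modules are free; comparing $k$-dimensions gives $\bar L_P\cong(kP)^m$ with $m=\chi(1)/|P|$. Finally — using that $\mathcal O$ is complete and $\mathcal OP$ is local, so that reduction modulo $\mathfrak p$ reflects projectivity of lattices and projective $\mathcal OP$-lattices are free — one obtains $L_P\cong(\mathcal OP)^m$, and hence $\chi_P$ equals $m$ times the character of the regular representation of $P$, i.e. $\chi_P=m\,\rho_P$.

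The only point requiring genuine input, rather than formal manipulation, is obtaining \emph{exact} vanishing of $\chi$ on non-trivial $p$-elements as opposed to a mere congruence. The natural elementary attempts — reducing the central idempotent $e_\chi$ modulo a prime above $p$ and invoking that central idempotents of $kG$ are supported on $p$-regular elements, or writing $\chi(x)$ for a non-trivial $p$-element $x$ as an alternating sum of multiplicities of the eigenvalues of $x$ — only yield $\chi(x)\equiv 0$ modulo that prime; it is precisely the projectivity of the defect-zero module (or, if one prefers to stay with ordinary characters, the generalized/block orthogonality relation $\sum_{\psi\in B}\psi(x)\overline{\psi(1)}=0$ applied to the singleton block $B=\{\chi\}$ with $x$ a non-trivial $p$-element) that upgrades the congruence to $\chi(x)=0$. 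Everything else — Frobenius reciprocity, the multiplicity count, and the reduction to $x\in P$ — is routine.
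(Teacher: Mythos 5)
The paper offers no proof of this lemma: it is quoted verbatim from \cite[Lemma 2.2]{MN}, so there is no internal argument to compare against. Your proof is correct and is the standard one -- the defect-zero hypothesis forces the $\mathcal{O}G$-lattice affording $\chi$ to be projective, its restriction to $P$ is then free over the local ring $\mathcal{O}P$, and both conclusions (vanishing on non-trivial $p$-elements, and $[\chi,(1_P)^G]=\chi(1)/|P|\geq 1$ by Frobenius reciprocity) drop out of $\chi_P$ being a multiple of the regular character of $P$, exactly as in the source. For what it is worth, the vanishing half is also Theorem 8.17 of the paper's own reference \cite{ISA}, which gives the stronger conclusion that $\chi$ vanishes on every element of order divisible by $p$ via a purely algebraic-integer argument, so one could bypass the modular machinery for that part and keep it only for the multiplicity count.
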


\begin{lemma}\emph{\cite[Theorem 2.1]{MN}}
\label{2.1MN}
Let $S$ be a finite non-abelian simple group, $p$ a prime, and $P\in\syl{p}{S}$. Then either $S$ has a
$p$-defect zero character, or there exists a cons\-tituent $\theta \in \irr{S}$ of the permutation character $(1_P)^S$ such that $\theta$ extends to $\op{Aut}(S)$ and $\theta(x)=0$ for some $p$-element $x$ of $S$.
\end{lemma}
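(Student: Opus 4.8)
The plan is to go through the classification of finite simple groups. If $S$ has an irreducible character of $p$-defect zero we are in the first case and there is nothing to prove (and, by Lemma~\ref{2.2MN}, such a character is in any case a constituent of $(1_P)^S$ vanishing on the non-identity $p$-elements of $S$). So assume henceforth that $S$ has \emph{no} character of $p$-defect zero. The argument then splits into two stages: first determine all such pairs $(S,p)$, and then, for each of them, exhibit a witness $\theta\in\irr{S}$ with the three required properties.

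For the first stage I would invoke the following known facts. A simple group of Lie type in its defining characteristic $p$ has the Steinberg character, of degree $|S|_p$, hence of $p$-defect zero. A simple group of Lie type in a non-defining characteristic $\ell$ has an $\ell$-block of defect zero with only a short, explicit list of small exceptions, all occurring at $\ell\in\{2,3\}$ (work of Michler, Willems and others on blocks of defect zero of groups of Lie type). For the alternating groups one uses the Granville--Ono theorem: for $p\ge 5$ every $n$ has a $p$-core partition, so $A_n$ has a $p$-defect zero character, while for $p\in\{2,3\}$ the alternating groups without a $p$-defect zero character are described by explicit conditions on $n$ (combinatorics of cores, as in Olsson's work), and form an infinite family when $p=2$. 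The sporadic groups are settled by inspection of the ATLAS. The net conclusion of this stage is that the pairs $(S,p)$ with no $p$-defect zero character form a well-understood list, in which necessarily $p\in\{2,3\}$, and which is finite apart from the contribution of the alternating groups at $p=2$.

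For the second stage, for each surviving $S$ I would exhibit $\theta\in\irr{S}$ with (i) $\langle \theta_P,1_P\rangle\neq 0$, so that $\theta$ is a constituent of $(1_P)^S$; (ii) $\theta$ extends to $\op{Aut}(S)$; and (iii) $\theta(x)=0$ for some $p$-element $x\in S$. For the sporadic groups and the finitely many Lie-type exceptions this is a bounded check: their character tables and (small, known) outer automorphism groups are available, so one verifies extendibility — for instance via a rational-valued or otherwise $\op{Out}(S)$-invariant constituent of $(1_P)^S$ — together with the presence of a $p$-element zero. The genuinely difficult part, and the main obstacle, is the alternating family $S=A_n$ at $p\in\{2,3\}$, where one needs a construction uniform in $n$. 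The natural candidate is the standard character $\theta$ of degree $n-1$ (or a low-degree relative): it is irreducible for $n\ge 5$, it extends to $S_n=\op{Aut}(A_n)$ for $n\ne 6$ (and $A_6$ is not exceptional, having $p$-defect zero characters for $p=2,3$), and it occurs in $(1_P)^{A_n}$ as soon as a Sylow $p$-subgroup $P$ has more than one orbit on $\{1,\dots,n\}$, with multiplicity one less than the number of those orbits; moreover $\theta(g)=(\text{number of fixed points of }g)-1$ for a permutation $g$, so $\theta$ vanishes on any $p$-element having exactly one fixed point. It then remains to check, via the base-$p$ digits of $n$, when such a $p$-element lies in the even subgroup $A_n$, and, in the residual configurations where it does not, to pass to another explicit constituent of $(1_P)^{A_n}$ (for example one labelled by a partition with two parts) or to dispose of the finitely many small $n$ directly. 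Organising this case analysis — in particular keeping the alternating-group argument uniform over all $n$ rather than merely a finite verification — is where the real work of the proof lies; the conceptual input beyond the CFSG and explicit character-table computations is slight.
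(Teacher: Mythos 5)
The paper itself offers no proof of this lemma: it is quoted verbatim from Malle--Navarro \cite[Theorem 2.1]{MN}, so the only comparison available is with the cited source. Your overall strategy --- reduce via Granville--Ono to $p\in\{2,3\}$, dispatch groups of Lie type by the Steinberg character in defining characteristic and the Michler--Willems results on defect-zero blocks otherwise, check the sporadic groups against the ATLAS, and treat the alternating groups separately --- is indeed the strategy of that proof, so the skeleton is sound.

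The genuine gap is that the alternating-group case, which you correctly identify as the crux, is not actually carried out, and the specific witness you propose fails in identifiable subfamilies rather than just in ``residual configurations''. A $p$-element of $S_n$ is a product of disjoint cycles of $p$-power length, so it moves a multiple-of-$p$... no, it moves a number of points that is a sum of $p$-powers each at least $p$; in particular for $p=2$ it moves an even number of points, so $\operatorname{fix}(g)\equiv n \pmod 2$. Hence for $n$ even \emph{no} $2$-element has exactly one fixed point, and the degree-$(n-1)$ character $\theta=\operatorname{fix}-1$ never vanishes on a $2$-element of $A_n$; similarly for $p=3$ one needs $n\equiv 1\pmod 3$, and even when a one-fixed-point $p$-element exists one must still check it lies in $A_n$ (for $p=2$ and $n\equiv 3\pmod 4$ this forces a different cycle type). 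Since the integers $n$ for which $A_n$ has no $2$- or $3$-defect-zero character form infinite families, these failures cannot be absorbed into a finite verification: one genuinely needs the uniform combinatorial argument with other two-row (or hook) constituents of $(1_P)^{A_n}$ that you only gesture at, and this is precisely the content of the cited proof. There is also a smaller unaddressed edge case: when $n$ is a $p$-power a Sylow $p$-subgroup may be transitive on $\{1,\dots,n\}$, in which case the degree-$(n-1)$ character is not a constituent of $(1_P)^{A_n}$ at all. As it stands, then, your text is a correct outline with the decisive infinite case left open.
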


\begin{lemma}\emph{\cite[Lemma 2.8]{DPSS}}
\label{contradiction_lemma}
Let $A$ be an abelian group that acts coprimely and faithfully by automorphisms on a group $M$. If $M$ is characteristically simple, then there exists $\theta \in \op{Irr}(M)$ such that $I_A(\theta)=1$.
\end{lemma}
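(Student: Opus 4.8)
Write $M=S_1\times\cdots\times S_t$ with each $S_i\cong S$ simple (possible since $M$ is characteristically simple): either $S$ is cyclic of prime order $p$, so $M$ is elementary abelian, or $S$ is non-abelian simple. I would exhibit the required $\theta$ separately in these two cases, using the coprimality of the action throughout and the CFSG only in the non-abelian case.

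\emph{The abelian case.} Here $M$ is an $\mathbb{F}_p$-module with $p\nmid|A|$. Since linear characters separate the points of $M$, the group $A$ acts faithfully (and coprimely) on the dual $\mathbb{F}_pA$-module $\op{Irr}(M)\cong\op{Hom}(M,\mathbb{C}^{\times})$, so it suffices to find a regular orbit of a coprime abelian $A\le\op{GL}(W)$ on a finite $\mathbb{F}_pA$-module $W$. As $\mathbb{F}_pA$ is semisimple (Maschke) and $A$ is abelian, every simple summand is one-dimensional over its endomorphism field $E$ and $A$ acts on it through a faithful homomorphism into $E^{\times}$; hence a non-zero vector of a simple summand of the $i$th homogeneous component $W_i$ of $W$ has stabilizer exactly the kernel $K_i$ of the $A$-action on $W_i$. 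Taking $w$ to be the sum of one such vector from each homogeneous component gives $\op{Stab}_A(w)=\bigcap_iK_i$, which is the kernel of $A$ on $W$, namely $1$.

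\emph{The non-abelian case.} Now $A\le\op{Aut}(M)=\op{Aut}(S)\wr\op{Sym}(t)$; let $B=\ker(A\to\op{Sym}(t))$. The $A$-orbits on $\{S_1,\dots,S_t\}$ induce an $A$-invariant factorisation $M=\prod_jM_j$, with $\op{Irr}(M)=\prod_j\op{Irr}(M_j)$ and $I_A(\prod_j\theta_j)=\bigcap_jI_A(\theta_j)$; since $\bigcap_j\ker(A\to\op{Aut}(M_j))=1$ by faithfulness, it is enough to find, for each $j$, a character of $M_j$ whose $A$-stabilizer equals $\ker(A\to\op{Aut}(M_j))$. Passing to the image of $A$ in $\op{Aut}(M_j)$, I may therefore assume $A$ acts faithfully and transitively on the $t$ factors; an abelian transitive permutation group is regular, so I index the factors by $Q:=A/B$ (with $|Q|=t$), and then $B$ acts on $S_q$ through a homomorphism $\phi_q\colon B\to\op{Aut}(S)$ with $\bigcap_q\ker\phi_q=1$, while each $a\in A$ maps $S_q$ onto $S_{\bar aq}$. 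As $\gcd(|B|,|S|)=1$, each $U_q:=\phi_q(B)$ is an abelian coprime subgroup of $\op{Aut}(S)$ meeting $\op{Inn}(S)$ trivially, and conjugating by the components of suitable $a\in A$ (using that $A$ is abelian) shows that all the $U_q$ are $\op{Aut}(S)$-conjugate. I would then choose $\theta_q\in\op{Irr}(S)$ ($q\in Q$) so that \emph{(a)} the tuple of $\op{Aut}(S)$-orbits $([\theta_q])_{q\in Q}$ is fixed by no non-trivial translation of $Q$, and \emph{(b)} $\op{Stab}_{\op{Aut}(S)}(\theta_q)\cap U_q=1$ for all $q$. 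For $\theta:=\theta_{q_1}\times\cdots\times\theta_{q_t}$ one then has $I_A(\theta)=1$: an $a$ with $\bar a\neq1$ carries each $\theta_q$ to an $\op{Aut}(S)$-conjugate in position $\bar aq$, forcing $[\theta_{\bar aq}]=[\theta_q]$ for all $q$ against (a); and $1\neq a\in B\cap I_A(\theta)$ would give $\phi_q(a)\in\op{Stab}_{\op{Aut}(S)}(\theta_q)\cap U_q=1$ for all $q$, hence $a\in\bigcap_q\ker\phi_q=1$.

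\emph{The main obstacle.} The substantive point, where the CFSG is needed, is the existence of the $\theta_q$ in the non-abelian case. Because the $U_q$ are $\op{Aut}(S)$-conjugate and $\op{Aut}(S)$-orbits are $\op{Aut}(S)$-invariant, the set $\Omega_0$ of $\op{Aut}(S)$-orbits on $\op{Irr}(S)$ containing a point with trivial $U_q$-stabilizer is independent of $q$. Granting $|\Omega_0|\ge3$, condition (a) is achievable inside $\Omega_0^{\,Q}$ by a short count (a translation by a non-identity element of $Q$ has order $\ge2$, so fixes at most $|\Omega_0|^{|Q|/2}$ tuples, and $(|Q|-1)\,|\Omega_0|^{|Q|/2}<|\Omega_0|^{|Q|}$), after which a $U_q$-regular representative is selected in each chosen orbit to obtain (b). Thus the whole argument reduces to the simple-group statement: \emph{for every non-abelian simple $S$ and every abelian $U\le\op{Out}(S)$ with $\gcd(|U|,|S|)=1$, the group $U$ has regular orbits on $\op{Irr}(S)$ meeting at least three distinct $\op{Aut}(S)$-orbits} (the case $U=1$ merely asking $|\op{Irr}(S)/\op{Aut}(S)|\ge3$). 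Verifying this family by family — splitting the outer automorphisms of groups of Lie type into diagonal, field and graph types, handling alternating and sporadic groups via the abundance of irreducible characters relative to $|\op{Out}(S)|$, and using the solubility of $\op{Out}(S)$ — is the hard part of the proof.
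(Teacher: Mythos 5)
The paper does not actually prove this lemma; it is quoted verbatim from \cite[Lemma 2.8]{DPSS}, so there is no in-paper argument to compare yours against line by line. Judged on its own terms, your two-case reduction is correct and, as it happens, mirrors the structure of the proof in \cite{DPSS}: the abelian case is the standard regular-orbit argument for an abelian group on a faithful completely reducible module (homogeneous components, Schur's lemma forcing each simple summand to be one-dimensional over its endomorphism field, stabilizer of a generic vector equal to the kernel on that component), and it is complete as written. The reduction of the non-abelian case to a single transitive orbit, the identification of $A/B$ with a regular abelian permutation group, the observation that the groups $U_q=\phi_q(B)$ are coprime to $S$, meet $\op{Inn}(S)$ trivially and are $\op{Aut}(S)$-conjugate, and the translation-counting argument that turns ``at least three $\op{Aut}(S)$-orbits containing $U$-regular characters'' into an aperiodic tuple are all sound.

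The gap is exactly where you say it is, and it is not a small one: the statement that for every non-abelian simple $S$ and every abelian $U\leqslant\op{Aut}(S)$ with $(\abs{U},\abs{S})=1$ there are at least three distinct $\op{Aut}(S)$-orbits on $\op{Irr}(S)$ each containing a character with trivial $U$-stabilizer is the entire mathematical content of the lemma; everything else is formal bookkeeping. This is precisely the CFSG-dependent simple-group lemma that \cite{DPSS} establishes before deducing their Lemma 2.8, and asserting it with a one-sentence sketch of a family-by-family verification does not constitute a proof. So your proposal should be regarded as a correct reduction of the lemma to that simple-group statement, not as a proof of the lemma itself; to close it you would either have to carry out the case analysis over the finite simple groups or cite the corresponding result of \cite{DPSS} explicitly, at which point one may as well cite their Lemma 2.8 directly, as the paper does.
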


We also collect some preliminary results regarding conjugacy class sizes. We start with the next elementary properties which are frequently used, sometimes with no comment.

\begin{lemma}
Let $N$ be a normal subgroup of a group $G$, and let $p$ be a prime. We have:

\emph{(a)} $\abs{x^N}$ divides $\abs{x^G}$, for any $x\in N$.
	
\emph{(b)} $\abs{(xN)^{G/N}}$ divides $\abs{x^G}$, for any $x\in G$.
	
\emph{(c)} If $xN\in G/N$ is a $p$-element, then $xN=yN$ for some $p$-element $y\in G$.

\end{lemma}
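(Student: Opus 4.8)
The plan is to derive all three statements from elementary index bookkeeping and from the behaviour of the $p$-part of an element; no deep input is needed.

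For \emph{(a)}, fix $x\in N$. Since $N$ is normal we have $\ce{N}{x}=N\cap\ce{G}{x}$, so the standard isomorphism theorems give $\abs{x^N}=\abs{N:\ce{N}{x}}=\abs{N\ce{G}{x}:\ce{G}{x}}$; as $\ce{G}{x}\le N\ce{G}{x}\le G$, this index divides $\abs{G:\ce{G}{x}}=\abs{x^G}$. For \emph{(b)}, let $\rho\colon G\to G/N$ be the canonical projection. Every $c\in\ce{G}{x}$ satisfies $(cN)(xN)(cN)^{-1}=xN$, so $\rho(\ce{G}{x})=\ce{G}{x}N/N\le\ce{G/N}{xN}$; hence, by orbit--stabiliser in $G/N$, $\abs{(xN)^{G/N}}=\abs{G/N:\ce{G/N}{xN}}$ divides $\abs{G/N:\ce{G}{x}N/N}=\abs{G:\ce{G}{x}N}$, which in turn divides $\abs{G:\ce{G}{x}}=\abs{x^G}$ because $\ce{G}{x}\le\ce{G}{x}N$.

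For \emph{(c)}, write $x=x_px_{p'}$ for the commuting $p$-part and $p'$-part of $x$, both powers of $x$. Then $x_pN$ and $x_{p'}N$ commute, $x_pN$ has $p$-power order, $x_{p'}N$ has order coprime to $p$, and their product is $xN$; by uniqueness of such a decomposition inside the cyclic group $\langle xN\rangle$, they are precisely the $p$-part and the $p'$-part of $xN$. The hypothesis that $xN$ is a $p$-element forces its $p'$-part to be trivial, i.e.\ $x_{p'}\in N$, whence $xN=x_pN$ with $y:=x_p$ a $p$-element of $G$.

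I do not anticipate any genuine obstacle: \emph{(a)} and \emph{(b)} are pure index computations with the isomorphism theorems, and the only step in \emph{(c)} meriting a sentence is the identification of $x_pN$ and $x_{p'}N$ with the $p$- and $p'$-parts of $xN$, which is immediate once one notes that the order of $x_pN$ divides that of $x_p$, the order of $x_{p'}N$ divides that of $x_{p'}$, and that commuting elements, one of $p$-power order and one of order prime to $p$, with product $xN$ are unique.
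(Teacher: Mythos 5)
Your proof is correct in all three parts; the paper states this lemma without proof, labelling these as elementary properties ``frequently used, sometimes with no comment,'' and your argument is exactly the standard one it implicitly relies on: the second isomorphism theorem for (a), the containment $\ce{G}{x}N/N\leqslant\pmb{\op{C}}_{G/N}(xN)$ for (b), and the $p$-part/$p'$-part decomposition $x=x_px_{p'}$ for (c).
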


\begin{lemma}
\label{wielandt}
Let $N$ be a normal subgroup of a group $G$, and let $H\in\hall{\pi}{N}$ for a set of primes $\pi$. If $x\in H$ is such that $\abs{x^G}$ is a $\pi$-number, then $x$ lies in $\rad{\pi}{N}$.
\end{lemma}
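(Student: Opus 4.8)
This is the classical theorem of Wielandt on $\pi$-separability of conjugacy classes. The plan is to reduce the problem to the well-known statement: if $x \in N$ lies in a Hall $\pi$-subgroup $H$ of $N$ and $|x^N|$ is a $\pi$-number, then $x \in \rad{\pi}{N}$. Indeed, since $x \in N$, by part (a) of the preceding lemma we have that $|x^N|$ divides $|x^G|$, so if $|x^G|$ is a $\pi$-number then so is $|x^N|$. Thus it suffices to prove the assertion with $N$ in place of $G$, i.e. one may assume $G = N$ from the start.

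First I would observe that $x^N \subseteq H^N = \bigcup_{g \in N} H^g$, the union of the conjugates of $H$; moreover $\langle x^N \rangle$ is a normal subgroup of $N$ contained in this union. The key step is Wielandt's argument: consider $K = \langle x^N \rangle \trianglelefteq N$. Since $x$ is a $\pi$-element contained in $H$ and $|x^N|$ is a $\pi$-number, the coset counting / transfer argument shows $x$ centralises a Hall $\pi$-subgroup of a suitable subgroup, and iterating one gets that $K$ is generated by $\pi$-elements with the property that $K \cap H \in \hall{\pi}{K}$ and $x$ lies in the core of $K \cap H$ in $K$. More cleanly: it is a standard fact (Wielandt) that an element $x$ of a Hall $\pi$-subgroup $H$ of $N$ with $|x^N|$ a $\pi$-number must satisfy $x \in \rad{\pi}{N}$ — the proof runs by induction on $|N|$, passing to $N/\rad{\pi'}{N}$ (where $\rad{\pi'}{N}$ is trivial by minimality, so $\fit{N}$ becomes a $\pi$-group whenever $N$ is $\pi$-separable) and to proper subgroups containing $\ce{N}{x}$, using that $\ce{N}{x} H = N$ forces $x \in \ze{H} \cap \ldots$; when $N$ is not $\pi$-separable one instead uses that $\langle x^N\rangle \le O^{\pi'}(N)$ and works inside there.

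Concretely, the cleanest route I would take: let $M = \langle x^N \rangle$, a normal subgroup of $N$. Then $x \in M \cap H \in \hall{\pi}{M}$ since $|M : M \cap H|$ divides $|N:H|$, a $\pi'$-number, while $M \cap H$ is a $\pi$-group. Also $M = \langle x^N \rangle$ is generated by the $N$-conjugates of $x$, all lying in $M \cap H$ together with their conjugates; since $|x^N| = |N : \ce{N}{x}|$ is a $\pi$-number, $\ce{N}{x}$ contains a Hall $\pi'$-subgroup of $N$, hence (intersecting with $M$) a Hall $\pi'$-subgroup of $M$. Now one shows $x \in \rad{\pi}{M}$: the subgroup $\rad{\pi}{M}$ equals the intersection of all Hall $\pi$-subgroups of $M$ precisely when $M$ is $\pi$-separable, but in general one invokes the classical Wielandt theorem directly for $M$. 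Since $\rad{\pi}{M} \trianglelefteq M \trianglelefteq N$ and $\rad{\pi}{M}$ is characteristic in $M$, we get $\rad{\pi}{M} \le \rad{\pi}{N}$, whence $x \in \rad{\pi}{N}$, as desired.

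The main obstacle is handling the non-$\pi$-separable case, where "$\rad{\pi}{N}$ = intersection of Hall $\pi$-subgroups" may fail and one cannot simply quotient out $\rad{\pi'}{N}$ to reduce $\fit{N}$ to a $\pi$-group. For this I would lean on the general form of Wielandt's theorem (which does not require $\pi$-separability): if $x$ lies in some Hall $\pi$-subgroup of $N$ and $\langle x^N \rangle$ is a $\pi$-group, then $\langle x^N \rangle \le \rad{\pi}{N}$. The nontrivial content is precisely that $\langle x^N \rangle$ is a $\pi$-group, which follows because $\langle x^N \rangle \le M \cap H \cdot (\text{stuff})$ — more precisely, since $x^N$ consists of $\pi$-elements and $|x^N|$ is a $\pi$-number, a theorem of Wielandt guarantees $\langle x^N \rangle$ is a $\pi$-group. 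I would cite this (e.g. from Huppert or Isaacs' finite group theory text) rather than reprove it. Given that, the lemma follows in one line.
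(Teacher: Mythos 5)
Your reduction to the case $G=N$ (via $\abs{x^N}$ dividing $\abs{x^G}$) is exactly how the paper starts, and your ultimate strategy --- invoke the classical Wielandt theorem for an element of a Hall $\pi$-subgroup whose class size is a $\pi$-number --- is legitimate and proves the lemma. But the paper does not cite that theorem; it reproves it in two lines, and you circle around the decisive step without ever writing it down: from $(\abs{x^N},\abs{N:H})=1$ one gets $N=H\ce{N}{x}$, hence $x^N=x^{H\ce{N}{x}}=x^H\subseteq H$, so $\langle x^N\rangle$ is a normal $\pi$-subgroup of $N$ and lies in $\rad{\pi}{N}$. Compared with that, most of your intermediate machinery is unnecessary, and two of your side claims are not right as stated: (i) from $\abs{N:\ce{N}{x}}$ being a $\pi$-number you cannot conclude that $\ce{N}{x}$ \emph{contains} a Hall $\pi'$-subgroup of $N$, since neither $N$ nor $\ce{N}{x}$ need possess Hall $\pi'$-subgroups outside the $\pi$-separable case (you only know $\abs{\ce{N}{x}}_{\pi'}=\abs{N}_{\pi'}$); and (ii) your closing formulation --- that $\langle x^N\rangle$ is a $\pi$-group merely because $x^N$ consists of $\pi$-elements and $\abs{x^N}$ is a $\pi$-number --- quietly drops the hypothesis $x\in H\in\hall{\pi}{N}$, which is essential (for a single prime it is automatic since every $p$-element lies in a Sylow $p$-subgroup, but for a general $\pi$ it is not). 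The transfer/iteration sketch and the detour through $M=\langle x^N\rangle$ can be deleted once the factorisation $N=H\ce{N}{x}$ is in hand.
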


\begin{proof}
Since $\abs{x^N}$ divides $\abs{x^G}$, then $(\abs{x^N}, \abs{N:H})=1$. It follows $N=H\ce{N}{x}$ and so $\langle x^N\rangle \leqslant \rad{\pi}{N}$.
\end{proof}

\bigskip

Next we recall a generalisation of the above lemma when $N=G$ and $\pi=\{p\}$.

\begin{lemma}\emph{\cite[Lemma 3]{BK}}
\label{berkokazarin}
Let $x\in G$. If $\abs{x^G}$ is a power of a prime $p$, then $[x^G, x^G]$ is a $p$-group.
\end{lemma}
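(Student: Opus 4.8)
The plan is to reduce the statement to a claim about the derived subgroup of the normal closure $K:=\langle x^G\rangle$ and then to run an induction on $\abs{G}$ powered by a single structural fact about the $p'$-core of $K$. First I would record the purely algebraic reduction $[x^G,x^G]=K'$. Since $x^G$ is a $G$-invariant (hence $K$-invariant) generating set of $K$, the identity $[y,z]^k=[y^k,z^k]$ shows that the set $\{[y,z]:y,z\in x^G\}$ is permuted by conjugation in $K$; thus $V:=\langle[y,z]:y,z\in x^G\rangle$ is normal in $K$, the images of $x^G$ commute in $K/V$, and so $K/V$ is abelian. Hence $K'\leqslant V$, and as $V\leqslant K'$ trivially we get $V=K'$. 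The task becomes: $K'$ is a $p$-group.

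Next the arithmetic hypothesis enters through the centralizer. Because $\abs{x^G}=\abs{G:\ce{G}{x}}=p^a$ is a $p$-number, $\ce{G}{x}$ contains a full Hall $p'$-subgroup $H\in\hall{p'}{G}$. The subgroup $\rad{p'}{K}$ is characteristic in $K\trianglelefteq G$, hence normal in $G$, so being a normal $p'$-subgroup it lies in every Hall $p'$-subgroup; in particular $\rad{p'}{K}\leqslant H\leqslant\ce{G}{x}$. Applying this to every conjugate $x^g$ (which centralizes $\rad{p'}{K}^g=\rad{p'}{K}$) shows that $K=\langle x^G\rangle$ centralizes $\rad{p'}{K}$, that is, $\rad{p'}{K}\leqslant\ze{K}$. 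This ``central $p'$-core'' fact is the engine of the induction.

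Now I would induct on $\abs{G}$. If $\rad{p}{K}\neq 1$, choose a minimal normal subgroup $N$ of $G$ inside the nontrivial normal $p$-subgroup $\rad{p}{K}$; then $N$ is a $p$-group contained in $K$, and in $\overline{G}=G/N$ the image class satisfies $\abs{\overline{x}^{\overline{G}}}\mid p^a$ while $\overline{K}=\langle\overline{x}^{\overline{G}}\rangle$. By the inductive hypothesis $[\overline{x}^{\overline{G}},\overline{x}^{\overline{G}}]=K'N/N\cong K'/(K'\cap N)$ is a $p$-group, and since $N$ is a $p$-group this forces $K'$ to be a $p$-group. If instead $\rad{p}{K}=1$, then $\fit{K}=\rad{p'}{K}\leqslant\ze{K}$, whence $\ce{K}{\fit{K}}=K$; combined with the containment $\ce{K}{\fit{K}}\leqslant\fit{K}$ this yields $K=\fit{K}\leqslant\ze{K}$, so $K$ is abelian and $K'=1$. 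In both cases $K'=[x^G,x^G]$ is a $p$-group.

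The step $\ce{K}{\fit{K}}\leqslant\fit{K}$ is exactly where solvability of $K$ is required, and this is the main obstacle. For a general group one only has $\ce{K}{\fit{K}^{*}}\leqslant\fit{K}^{*}$, so if $K$ possessed nonabelian composition factors the layer $\op{E}(K)$ could survive the case $\rad{p}{K}=1$ and break the argument. The missing input is precisely Kazarin's theorem that a conjugacy class of prime-power size generates a solvable subgroup, i.e.\ that $K=\langle x^G\rangle$ is solvable; this is the CFSG-dependent ingredient, and granting it every group arising in the induction is solvable, so all the Fitting-subgroup facts apply and the induction closes. Should one instead wish to avoid citing Kazarin, one would have to reprove this solvability here by ruling out the possible nonabelian simple sections—via the kind of simple-group information encoded in Lemmas \ref{2.1MN}--\ref{contradiction_lemma}—which is the genuinely hard part of the proof.
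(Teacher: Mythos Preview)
The paper does not prove this lemma at all: it is quoted verbatim from Berkovich--Kazarin \cite{BK} and used as a black box. So there is no ``paper's own proof'' to compare against; your proposal is an independent attempt at the result.

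Your outline is essentially the standard reduction and is mostly sound, but there is one genuine slip and one point worth sharpening. The slip is the line ``$\ce{G}{x}$ contains a full Hall $p'$-subgroup $H\in\hall{p'}{G}$''. Nothing in the hypotheses makes $G$ $p$-solvable, so Hall $p'$-subgroups of $G$ need not exist, and even after you invoke Kazarin to make $K$ solvable, $G$ itself may still fail to have them. The conclusion you want, $\rad{p'}{K}\leqslant\ce{G}{x}$, is nevertheless immediate without Hall subgroups: $\rad{p'}{K}\trianglelefteq G$, so $\abs{\rad{p'}{K}\ce{G}{x}:\ce{G}{x}}$ divides both $\abs{\rad{p'}{K}}$ (a $p'$-number) and $\abs{G:\ce{G}{x}}$ (a $p$-power), hence equals $1$. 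With this repair your ``central $p'$-core'' step goes through.

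On the structural side you have correctly isolated the real content: the case $\rad{p}{K}=1$ requires $\ce{K}{\fit{K}}\leqslant\fit{K}$, which is exactly the solvability of $K=\langle x^G\rangle$, i.e.\ Kazarin's theorem. That theorem is indeed the deep (CFSG-based) ingredient behind the Berkovich--Kazarin lemma, and your honest acknowledgement that without it the induction does not close is appropriate. The remark that one might instead try to exclude nonabelian simple sections via Lemmas~\ref{2.1MN}--\ref{contradiction_lemma} is misleading, however: those lemmas concern characters and vanishing, not class sizes, and are not the tools one would use to recover Kazarin's result.
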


We end this section with the main result of \cite{Br}, which will be necessary for proving Theorem \ref{teoE}. We present here an adapted version for our context of vanishing $G$-conjugacy classes.

\begin{proposition}
\label{brough}
Let $G$ be a group which contains a non-trivial normal $p$-subgroup $N$, for a given prime $p$. Then  $\abs{x^G}$ is a multiple of $p$ for each $x\in N\cap \van{G}$.
\end{proposition}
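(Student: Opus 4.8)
The plan is to prove the contrapositive statement: if $N$ is a normal $p$-subgroup of $G$ and $x\in N$ is such that $\abs{x^G}$ is coprime to $p$, then $x$ is non-vanishing in $G$. (For $x=1$ there is nothing to prove, since the identity element is always non-vanishing.)

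First I would extract structural information from the hypothesis on the class size. As $\abs{x^G}=[G:\ce{G}{x}]$ is a $p'$-number, $\ce{G}{x}$ contains a Sylow $p$-subgroup $P$ of $G$; and since $N$ is a normal $p$-subgroup we have $N\leqslant P\leqslant\ce{G}{x}$, so $x$ centralises $N$. Because $x\in N$, this forces $x\in\ze{N}$. Hence $A:=\ze{N}$ is an abelian $p$-subgroup which is characteristic in $N$, and therefore normal in $G$, and the whole conjugacy class $x^G$ is contained in $A$.

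Next, fix $\chi\in\irr{G}$ and apply Clifford's theorem to $A\trianglelefteq G$: we may write $\chi_A=e(\lambda_1+\dots+\lambda_t)$, where $\lambda=\lambda_1,\dots,\lambda_t$ are the distinct $G$-conjugates of a linear character $\lambda\in\irr{A}$ and $e,t$ are positive integers. Summing $\lambda$ over all $G$-conjugates of $x$ and regrouping these terms by the cosets of $I_G(\lambda)$ in $G$ yields the identity
\[
\chi(x)\;=\;e\,\frac{\abs{\ce{G}{x}}}{\abs{I_G(\lambda)}}\,\sum_{y\in x^G}\lambda(y),
\]
so that $\chi(x)\neq 0$ precisely when $\sum_{y\in x^G}\lambda(y)\neq 0$.

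To conclude, I would observe that $A$ is a $p$-group, so each $\lambda(y)$ is a complex root of unity of $p$-power order. Passing to $\mathbb{Z}[\zeta]$, where $\zeta$ is a primitive root of unity of order equal to the exponent of $A$, the prime $p$ is totally ramified, the residue field at the unique prime $\mathfrak{p}$ of $\mathbb{Z}[\zeta]$ above $p$ is $\mathbb{F}_p$, and $\zeta\equiv 1\pmod{\mathfrak{p}}$; hence $\lambda(y)\equiv 1\pmod{\mathfrak{p}}$ for all $y$, and consequently $\sum_{y\in x^G}\lambda(y)\equiv\abs{x^G}\pmod{\mathfrak{p}}$. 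Since $p\nmid\abs{x^G}$, this is nonzero, so $\chi(x)\neq 0$; as $\chi$ was arbitrary, $x\notin\van{G}$. The reduction step and the observation that $\ce{G}{x}$ contains a full Sylow $p$-subgroup are routine bookkeeping; the load-bearing point, and the one I expect to be least obvious, is the passage from $\chi(x)$ to the class sum $\sum_{y\in x^G}\lambda(y)$ followed by the congruence modulo a prime over $p$ --- this is exactly what converts the arithmetic hypothesis ``$p\nmid\abs{x^G}$'' into a non-vanishing conclusion. Beyond keeping the constants in the Clifford identity straight, I do not foresee any real difficulty.
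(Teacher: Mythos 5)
Your argument is correct and complete. Note that the paper itself gives no proof of this proposition: it is stated as an adaptation of the main result of \cite{Br} and simply cited. Your reconstruction follows essentially the same route as that source: reduce to $x\in\ze{N}$ via the observation that $\ce{G}{x}$ contains a Sylow $p$-subgroup and hence the normal $p$-subgroup $N$; restrict $\chi$ to the abelian normal $p$-subgroup $\ze{N}$ by Clifford's theorem; and convert $\chi(x)$ into the class sum $\sum_{y\in x^G}\lambda(y)$, which is congruent to $\abs{x^G}\not\equiv 0$ modulo the unique prime of $\mathbb{Z}[\zeta]$ above $p$. The intermediate identity $\chi(x)=e\,\abs{\ce{G}{x}}\,\abs{I_G(\lambda)}^{-1}\sum_{y\in x^G}\lambda(y)$ checks out (both sides equal $e\,\abs{I_G(\lambda)}^{-1}\sum_{g\in G}\lambda^g(x)$), and since the rational coefficient is nonzero the non-vanishing of $\chi(x)$ is indeed equivalent to that of the class sum, so no gaps remain.
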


%%%%%%%%%%%%%%%%%%%%%%%%%%%%%%%%%%%%%%%%%%%%%%%%%%%%%%%%%%%%%%%%%%%%%%%%%%%%%%%%%%%%%%%%%%%%%%%%

\section{Proof of Theorems \ref{teoA} and \ref{TEOB}} 

Certainly, Theorem \ref{teoA} is a direct application of Theorem \ref{TEOB}, so we focus on the proof of this last result. The next key proposition, which makes use of the CFSG, is inspired by the proof of \cite[Theorem B]{MN}.

\begin{proposition}
\label{lemma:minimalsubgroup}
Let $M$ be a non-abelian minimal normal subgroup of a group $G$, and let $p$ be a prime divisor of $\abs{M}$. Let $H$ be a subgroup of $G$ such that $H \cap M\in\syl{p}{M}$. Let $\beta\in\irr{H/H\cap M}$ Then, there exists $\chi \in \op{Irr}(G)$ such that $\chi$ is a constituent of $\beta^G$ and it vanishes on some $p$-element of $M$.

In particular, if $H=P\in\syl{p}{G}$, then there exists $\chi\in \op{Irr}(G)$ such that $\chi$ is a constituent of $(1_P)^G$ and it vanishes on some $p$-element of $M$.
\end{proposition}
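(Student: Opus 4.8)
The plan is to reduce everything to the simple group $S$ by means of Lemmas~\ref{2.2MN} and~\ref{2.1MN}, and then to lift the resulting vanishing character up to $G$ using Clifford theory together with Lemma~\ref{BianchiLemma}. Write $M = S_1 \times \cdots \times S_t$ with each $S_i \cong S$ a non-abelian simple group and $p \mid |S|$. Since the Sylow $p$-subgroups of a direct product are exactly the products of Sylow $p$-subgroups of the factors, $P_0 := H \cap M$ has the form $Q_1 \times \cdots \times Q_t$ with $Q_i \in \syl{p}{S_i}$, so that $(1_{P_0})^M = (1_{Q_1})^{S_1} \times \cdots \times (1_{Q_t})^{S_t}$.

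First I would establish the reduction step: \emph{every irreducible constituent of $(1_{P_0})^M$ lies under some irreducible constituent of $\beta^G$.} Since $\beta\in\irr{H/P_0}$ is trivial on $P_0 = H \cap M$, Mackey's formula yields
$$(\beta^G)|_M = \bigoplus_{g\in M\backslash G/H}\bigl(({}^g\beta)|_{{}^gP_0}\bigr)^M = \bigoplus_{g}\beta(1)\,(1_{{}^gP_0})^M,$$
using that $M\cap{}^gH = {}^g(H\cap M) = {}^gP_0$ because $M\trianglelefteq G$, and that ${}^g\beta$ is constant on ${}^gP_0$. Each ${}^gP_0$ is a Sylow $p$-subgroup of $M$, hence $M$-conjugate to $P_0$, so every summand equals $\beta(1)(1_{P_0})^M$ and $(\beta^G)|_M$ is a positive integer multiple of $(1_{P_0})^M$; the claim follows. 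Thus, given any constituent $\lambda\in\irr{M}$ of $(1_{P_0})^M$, there is $\chi\in\irr{G}$ which is a constituent of $\beta^G$ and lies over $\lambda$.

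Next I would apply Lemma~\ref{2.1MN} to $S$ and $p$ and split into two cases. If $S$ has a character $\psi$ of $p$-defect zero, put $\lambda:=\psi\times\cdots\times\psi$; then $\lambda(1)_p=(|S|_p)^t=|M|_p$, so $\lambda$ has $p$-defect zero in $M$, and by Lemma~\ref{2.2MN} (applied to $M$ and $P_0$) it is a constituent of $(1_{P_0})^M$ and vanishes on every non-trivial $p$-element of $M$. Take $\chi$ over $\lambda$ as above; by Clifford's theorem $\chi|_M = e\sum_i\lambda_i$, where the $\lambda_i$ are the $G$-conjugates of $\lambda$, each again of $p$-defect zero in $M$ (same degree) and hence vanishing on non-trivial $p$-elements of $M$. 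For any non-trivial $p$-element $y\in M$ we then get $\chi(y)=0$. If instead $S$ has no $p$-defect zero character, Lemma~\ref{2.1MN} furnishes a constituent $\theta\in\irr{S}$ of $(1_Q)^S$ which extends to $\op{Aut}(S)$ and satisfies $\theta(x)=0$ for some (necessarily non-trivial) $p$-element $x\in S$. Setting $\lambda:=\theta\times\cdots\times\theta$, which is a constituent of $(1_{P_0})^M$ by the factorisation above, Lemma~\ref{BianchiLemma} shows $\lambda$ extends to $G$; in particular $\lambda$ is $G$-invariant, so $\chi|_M = e\lambda$ for the $\chi$ chosen over $\lambda$. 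Taking $y:=(x,1,\dots,1)\in M$, a non-trivial $p$-element, gives $\chi(y)=e\,\theta(x)\theta(1)^{t-1}=0$. In either case $\chi$ is a constituent of $\beta^G$ vanishing on a $p$-element of $M$. Finally, the ``in particular'' statement is the special case $H=P\in\syl{p}{G}$ (so that $P\cap M\in\syl{p}{M}$) with $\beta=1_P$.

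I expect the only real obstacle to be organisational: setting up the Mackey reduction $(\beta^G)|_M = c\,(1_{P_0})^M$ correctly, and — in the $p$-defect zero branch — arguing over all $G$-conjugates of $\lambda$ rather than over $\lambda$ itself, which is legitimate precisely because having $p$-defect zero is a condition on degrees and so is preserved under $G$-conjugacy, whereas $\lambda$ need not be $G$-invariant there. The CFSG enters only through the already-quoted Lemmas~\ref{2.1MN} and~\ref{BianchiLemma}.
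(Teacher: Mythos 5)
Your proof is correct and takes essentially the same route as the paper's: the same decomposition $M=S_1\times\cdots\times S_t$, the same case division according to whether $S$ has a $p$-defect-zero character, and the same three quoted lemmas. The only differences are organisational --- you run the Mackey/Frobenius-reciprocity step directly in $G$ (showing $(\beta^G)_M$ is a positive multiple of $(1_{P_0})^M$) where the paper passes through the intermediate subgroup $HM$, and in the non-defect-zero case you conclude via Clifford's theorem and the $G$-invariance of $\lambda$ where the paper invokes Gallagher's theorem.
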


\begin{proof}
We have  $M=S_1 \times \cdots \times S_k$, where all $S_i$ are isomorphic to a non-abelian simple group $S$ with $p\in\pi(S)$.  If $\theta\in\irr{S}$ is of $p$-defect zero, then $\eta=\theta \times \cdots \times \theta\in\irr{M}$ and $\eta$ is also of $p$-defect zero. By Lemma \ref{2.2MN} applied to $M$ we have $[\eta, (1_{H\cap M})^M]\neq 0$ and $\eta$ vanishes on the non-trivial $p$-elements of $M$. 

Since $\beta\in\irr{H/H\cap M}$, we have  $[\beta_{H\cap M}, 1_{H\cap M}]\neq 0$. Then $(\beta^{HM})_M=(\beta_{H\cap M})^M=\beta(1)(1_{H\cap M})^M$ and $[\eta, (\beta^{HM})_M]=[\eta^{HM}, \beta^{HM}]\neq 0$. Hence there exists $\tau\in\irr{HM}$ such that $[\tau, \eta^{HM}]\neq 0 \neq [\tau, \beta^{HM}]$. Let $\chi\in\irr{G}$ over $\tau$. Then $\chi_M$ is sum of $G$-conjugate characters of $\eta$. Therefore $\chi$ vanishes on the non-trivial $p$-elements of $M$ and $[\chi, \beta^G]=[\chi_H, \beta]\neq 0$.

Suppose now that $S$ does not have a character of $p$-defect zero. By Lemma \ref{2.1MN}, there exists $\theta\in\irr{S}$ such that $[\theta, (1_{H\cap S})^S]\neq 0$ (note $H\cap S\in\syl{p}{S}$) which extends to $\op{Aut}(S)$, and there exists a $p$-element $x\in S$ such that $\theta(x)=0$. Thus $1\neq y=(x, \ldots, x)\in M$ is a $p$-element and $\eta=\theta\times\cdots\times\theta$ vanishes on $y$, and certainly $[\eta_{H\cap M}, 1_{H\cap M}]\neq 0$. Since $[\beta_{H\cap M}, 1_{H\cap M}]\neq 0$, arguing as in the previous paragraph, we may affirm that there exists $\tau \in\irr{HM}$ over $\eta$ and over $\beta$. Let $\chi\in\irr{G}$ be over $\eta$, so $[\chi, \beta^G]\neq 0$. By Lemma \ref{BianchiLemma}, $\eta$ extends to $G$. Let $\hat{\eta}$ be an extension of $\eta$. By Gallagher, $\chi=\hat{\eta}\rho$ for some $\rho\in\irr{G/M}$. Therefore, $\chi$ lies over $\beta$ and $\chi(y)=\eta(y)\rho(1)=0$.
\end{proof}

\begin{theorem}
\label{teoB}
Let $N$ be a normal subgroup of a group $G$, and let $P_0$ be a Sylow $p$-subgroup of $N$ for some prime $p$. Let $H$ be a subgroup of $G$ such that $H\cap N=P_0$, and let $\beta \in \irr{H/P_0}$. Then, $P_0$ is normal in $N$ (and therefore in $G$) if and only if all irreducible constituents of $\beta^G$ do not vanish on any $p$-element of $N$.
\end{theorem}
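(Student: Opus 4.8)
The plan is to prove both implications, with the nontrivial direction being the "only if" — actually, wait: the easy direction is that normality of $P_0$ forces non-vanishing, and the hard direction is the converse. Let me organize. First I would dispose of the forward direction: assume $P_0 \trianglelefteq N$, so $P_0 = \rad{p}{N}$ is characteristic in $N$ and hence normal in $G$. Pick any irreducible constituent $\chi$ of $\beta^G$ and any $p$-element $x \in N$; I must show $\chi(x) \neq 0$. Since $[\chi_H, \beta] \neq 0$ and $\beta$ has $P_0$ in its kernel, the restriction $\chi_{P_0}$ has the trivial character as a constituent; because $P_0 \trianglelefteq G$, Clifford theory tells us $\chi_{P_0}$ is a multiple of a single $G$-conjugate of $1_{P_0}$, hence $\chi_{P_0} = \chi(1) 1_{P_0}$, i.e. $P_0 \subseteq \ker\chi$. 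As $x \in P_0$ (all $p$-elements of $N$ lie in the normal Sylow $p$-subgroup), $\chi(x) = \chi(1) \neq 0$.

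For the converse I would argue by contradiction and induction on $|G|$, assuming every irreducible constituent of $\beta^G$ is non-vanishing on all $p$-elements of $N$ but $P_0$ is not normal in $N$. The key reduction is to find a chief factor of $G$ inside $N$ on which things go wrong. First I would handle the case where $N$ has a normal subgroup $K$ with $1 < K < N$: pass to $G/K$, $N/K$, $P_0K/K$ and an appropriate $\beta$, using standard correspondences between constituents of $\beta^G$ and of (the inflation of) the relevant induced character, to conclude $P_0K/K \trianglelefteq N/K$; likewise work inside a proper normal subgroup containing $P_0$ if one exists. The surviving configuration should force $\rad{p'}{N} = 1$ and push us toward a minimal normal subgroup $M$ of $G$ contained in $N$ with $p \mid |M|$. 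If $M$ is a $p$-group, then since $M \trianglelefteq G$ and $P_0$ is a Sylow $p$-subgroup of $N$ we have $M \subseteq P_0$, and a Frattini-type or induction argument on $G/M$ finishes things. The genuinely hard case is $M$ non-abelian: here I invoke Proposition \ref{lemma:minimalsubgroup} with $H \cap M \in \syl{p}{M}$ — note $H \cap M = H \cap N \cap M = P_0 \cap M \in \syl{p}{M}$ since $P_0 \in \syl{p}{N}$ and $M \trianglelefteq N$ — to produce $\chi \in \irr{G}$ that is a constituent of $\beta^G$ (strictly, of $(\beta')^G$ for a suitable $\beta' \in \irr{H/H\cap M}$ built from $\beta$, which requires matching up $\beta$'s behaviour on $P_0$ with the trivial character on $H \cap M$) and that vanishes on some $p$-element $y \in M \subseteq N$. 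This contradicts the hypothesis.

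The main obstacle, as I see it, is the bookkeeping in the non-abelian minimal normal subgroup case: Proposition \ref{lemma:minimalsubgroup} is stated for a $\beta \in \irr{H/H\cap M}$, whereas our $\beta$ only kills $P_0$, and $P_0 \cap M$ is a proper (Sylow $p$-) subgroup of $M$, not $M$ itself. So I need to argue that some constituent of the relevant induced character still has the trivial character of $H \cap M$ below it — which is where the assumption $\beta \in \irr{H/P_0}$ and $H \cap M \subseteq P_0$ is used: $\beta$ restricted to $H \cap M$ is trivial, so $[\beta_{H\cap M}, 1_{H\cap M}] \neq 0$, exactly the input Proposition \ref{lemma:minimalsubgroup} needs (its proof only uses $[\beta_{H\cap M},1_{H\cap M}]\neq 0$, not that $\beta$ is trivial on all of $P_0$). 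Making this compatibility precise, and ensuring the inductive reductions genuinely cover all cases so that one lands in either the $p$-group or non-abelian simple situation, is the crux; the rest is routine Clifford theory.
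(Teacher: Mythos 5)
Your forward direction and two of the three cases in the converse match the paper's argument, but there is a genuine gap at the decisive point: you assert that the inductive reduction ``should force $\rad{p'}{N}=1$'' and leave you facing a minimal normal subgroup $M\leqslant N$ of $G$ with $p\mid\abs{M}$. This cannot be arranged. Passing to $G/M$ for a minimal normal subgroup $M\leqslant N$ of $G$ only yields $P_0M\unlhd G$; when $M$ is a $p'$-group (abelian of prime order $q\neq p$, or a product of non-abelian simple groups of order coprime to $p$) this does not give $P_0\unlhd N$, and quotienting by $\rad{p'}{N}$ likewise only normalises $P_0\rad{p'}{N}$, not $P_0$. So the minimal counterexample you are left with is precisely one in which every minimal normal subgroup of $G$ inside $N$ is a $p'$-group (whence $\rad{p}{N}=1$, not $\rad{p'}{N}=1$), and your outline has no argument for it. The two cases you do treat ($M$ a $p$-group; $M$ non-abelian with $p\mid\abs{M}$, via Proposition \ref{lemma:minimalsubgroup} --- where your observation that $H\cap M\leqslant P_0\leqslant\op{ker}\beta$ makes $\beta\in\irr{H/H\cap M}$ is correct and is exactly how the paper applies that proposition) are the easy ones.

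The missing case is the heart of the paper's proof. There one takes a chief factor $K/M$ of $G$ with $K\leqslant P_0M$, so that $K/M$ is an abelian $p$-group, uses a Frattini argument together with $\rad{p}{N}=1$ to show that $K\cap P_0$ acts coprimely and faithfully on the characteristically simple group $M$, invokes Lemma \ref{contradiction_lemma} to produce $\theta\in\op{Irr}(M)$ with trivial inertia in $K\cap P_0$, so that $\eta=\theta^K$ is irreducible and vanishes on $K\smallsetminus M$ (which contains every non-trivial element of the $p$-group $K\cap P_0$), and then carries out a delicate construction --- an extension $\hat\theta$ of $\theta$ to its inertia group $T$ in $P_0M$, a Gallagher twist by the conjugate of a linear constituent of $\hat\theta_{P_0\cap T}$, and the Clifford correspondence --- to manufacture $\chi\in\op{Irr}(G)$ lying over both $\eta$ and $\beta$, which then vanishes on non-trivial $p$-elements of $K\cap P_0\leqslant N$. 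None of this is routine Clifford theory, and without it the proof does not close.
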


\begin{proof}
Suppose $P_0\unlhd N$. Let $\chi$ be a constituent of $\beta^G$ with $\beta\in\irr{H/H\cap N}$.  We have $[\beta_{P_0}, 1_{P_0}]\neq 0$, so $[\chi_{P_0},1_{P_0}]\neq 0$. Since $P_0\unlhd G$, then $\chi(x)\neq 0$ for all $p$-elements $x\in N$. Conversely, we consider that all irreducible constituents of $\beta^G$, where $\beta\in\irr{H/H\cap N}$, do not vanish on any $p$-element of $N$, and we claim that $P_0$ is normal in $N$. 

Suppose that the claim is false, and let us consider a counterexample which minimises $\abs{G}$. Let $M$ be a minimal normal subgroup of $G$ such that $M\leqslant N$. We check that the hypotheses are inherited by $\overline{G}=G/M$. Certainly $\overline{H}\cap \overline{N}=N/M\cap HM/M=(H\cap N)M/M\in\syl{p}{N/M}$. Since $\beta\in\irr{H/H\cap N}$, then $\beta \in\irr{H/H\cap M}$ so $\overline{\beta}\in\irr{HM/M}$. Besides $H\cap N\leqslant \op{ker}{\beta}$ so $\overline{H\cap N}\leqslant \op{ker}{\overline{\beta}}$. Let $\overline{\chi}\in\irr{\overline{G}}$ be an irreducible constituent of $\overline{\beta}^{\overline{G}}$ and $\overline{x}\in\overline{N}$ a $p$-element. Then we may assume that $x\in N\smallsetminus M$ is a $p$-element and, since $[\overline{\chi}, \overline{\beta}^{\overline{G}}]\neq 0$, then it is easy to see that $[\chi_H, \beta]\neq 0$ and $\overline{\chi}(\overline{x})=\chi(x)\neq 0$. By minimality, we get $\overline{P_0}\unlhd \overline{G}$, so $P_0M\unlhd G$.

Let us assume that $p$ divides the order of $M$. If $M$ is a $p$-group, then $M\leqslant P_0$ and $P_0=P_0M\unlhd G$, a contradiction. Hence $M$ is non-abelian. Since $\beta\in\irr{H/H\cap M}$, in virtue of Lemma \ref{lemma:minimalsubgroup} there exists $\chi\in\irr{G}$ such that $[\chi, \beta^G]\neq 0$ and $\chi(x)=0$ for some $p$-element $x\in M\leqslant N$, a contradiction again.

Thus $p$ does not divide the order of $M$ and $\rad{p}{N}=1$. Let $K/M$ be a chief factor of $G$ such that $K \leq P_0M\unlhd G$, so $K/M$ is an abelian $p$-group. Note $K=M(K\cap P_0)$ and $K\cap P_0\in\syl{p}{K}$ is abelian. By Frattini's argument, $G=K\no{G}{K\cap P_0}=M\no{G}{K\cap P_0}$, so $\ce{K\cap P_0}{M}\unlhd G$ and $\ce{K\cap P_0}{M}\leqslant\rad{p}{N}=1$. Therefore $K\cap P_0$ is an abelian $p$-group which acts coprimely and faithfully on $M$, and $M$ is characteristically simple. By Lemma \ref{contradiction_lemma} and Clifford theory, there exists $\theta \in \op{Irr}(M)$ such that $\eta =\theta^K$ is irreducible. In particular, $\eta$ and all its conjugates vanish on $K \setminus M$. Therefore, if we prove that there exists $\chi \in \op{Irr}(G)$ which lies over both $\eta$ and $\beta$ we will reach the final contradiction.

Let $T$ be the inertia subgroup for $\theta$ in $P_0M\unlhd G$. Since $(\abs{T/M},\abs{M})=1$ we have that $\theta$ extends to $\hat{\theta} \in \op{Irr}(T)$ by \cite[Corollary 6.28]{ISA}. Further, $p$ does not divide $\hat{\theta}(1)$ so $\hat{\theta}_{P_0 \cap T}$ has at least one linear constituent $\lambda$. As $T=M(P_0\cap T)$, then $P_0 \cap T \cong T/M$ and we can see $\lambda$ also as a character of $T/M$. By Gallagher, $\nu = \bar{\lambda}\hat{\theta}$ is an irreducible character of $T$, where $\bar{\lambda}$ is the complex conjugate of $\lambda$. Moreover, $\nu_M=\theta$ and by Clifford correspondence $\nu^{P_0M}\in\irr{P_0M}$. Hence $0\neq [1_{P_0\cap T}, \overline{\lambda}_{P_0\cap T}\hat{\theta}_{P_0\cap T}]=[1_{P_0\cap T}, \nu_{P_0\cap T}]=[(\nu_{P_0\cap T})^{P_0}, 1_{P_0}]=[(\nu^{P_0T})_{P_0}, 1_{P_0}]=[(\nu^{P_0M})_{P_0}, 1_{P_0}]=[\nu^{P_0M}, (1_{P_0})^{P_0M}]$. On the other hand, $(\beta^{HN})_N = \beta(1)(1_{P_0})^N=\beta(1)((1_{P_0})^{P_0M})^N$, so $[(\beta^{HN})_N, (\nu^{P_0M})^N]=[(\beta^{HN})_N, \nu^N]=[\beta^{HN}, \nu^{HN}]\neq 0$. Therefore there exists $\tau\in\irr{HN}$ over $\beta$ and over $\nu$. Let $\chi\in\irr{G}$ over $\tau$, so $[\chi, \beta^G]\neq 0$. Moreover, $\chi$ lies over $\theta$, and then $\chi$ lies over $\eta =\hat{\theta}$. Thus $\chi_K$ is a sum of $G$-conjugate characters of $\eta$. Hence $\chi(x)=0$ for all $x\in K\cap P_0$ and this is a final contradiction.
\end{proof}

\bigskip

Theorem \ref{TEOB} in the Introduction is now a corollary of the above result when we take $H$ a Sylow $p$-subgroup of $G$ (for Theorem \ref{TEOB} (iii)) and $H=P_0$ (for Theorem \ref{TEOB} (ii)). Moreover, when $N=G$ in Theorem \ref{TEOB}, then we obtain the characterisation (i)-(ii) below.

\begin{theorem}\emph{\cite[Theorem B]{MN}}
\label{MN}
Let $G$ be a group, $p$ a prime number, and $P$ a Sylow $p$-subgroup of $G$. Then the following conditions are equivalent:

\emph{(i)} $P$ is normal in $G$.

\emph{(ii)} $\chi(x)\neq 0$ for all irreducible constituents $\chi$ of $(1_P)^G$ and all $x\in P$.

\emph{(iii)} $p$ does not divide $\chi(1)$ for all irreducible constituents $\chi$ of $(1_P)^G$.
\end{theorem}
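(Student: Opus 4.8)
The plan is to prove that (i), (ii) and (iii) are pairwise equivalent via the implications (i) $\Rightarrow$ (ii), (i) $\Rightarrow$ (iii), (ii) $\Rightarrow$ (i) and (iii) $\Rightarrow$ (i). The implications leaving (i) are immediate: if $P \unlhd G$ and $\chi$ is an irreducible constituent of $(1_P)^G$, then $[\chi_P, 1_P] \neq 0$, so Clifford's theorem forces $\chi_P = \chi(1) 1_P$; hence $P \leqslant \op{ker}\chi$, which yields $\chi(x) = \chi(1) \neq 0$ for every $x \in P$ (this is (ii)) and $\chi(1) \mid \abs{G:P}$, a $p'$-number (this is (iii)). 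For the implication (ii) $\Rightarrow$ (i) there is nothing new to do: it is precisely Theorem~\ref{teoB} applied with $N = G$, $H = P_0 = P$ and $\beta = 1_P$ --- equivalently, the equivalence (i) $\Leftrightarrow$ (ii) of Theorem~\ref{TEOB} --- which has already been established above.

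Thus the whole content is the implication (iii) $\Rightarrow$ (i), which I would prove in contrapositive form: assuming $p \mid \abs{G}$ and $P$ not normal in $G$, I would produce an irreducible constituent of $(1_P)^G$ of degree divisible by $p$, arguing by induction on $\abs{G}$. Since the kernel of the permutation character $(1_P)^G$ is the largest normal subgroup of $G$ contained in $P$, which is $\rad{p}{G}$, whenever $\rad{p}{G} \neq 1$ one passes to $G/\rad{p}{G}$: its Sylow $p$-subgroup remains non-normal, and irreducible constituents of the associated permutation character inflate to constituents of $(1_P)^G$ of equal degree. So we may assume $\rad{p}{G} = 1$. Let $M$ be a minimal normal subgroup of $G$. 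If $p \nmid \abs{M}$, then either $PM/M$ is not normal in $G/M$ --- and inflation together with the inductive hypothesis applied to $G/M$ closes the case --- or $PM \unlhd G$, in which case $P$ acts non-trivially on $M$ (otherwise $P$ would be normal, hence characteristic, in $PM \unlhd G$, forcing $P \unlhd G$), and so non-trivially on $\irr{M}$; choosing $\theta \in \irr{M}$ with $p \mid \abs{G:I_G(\theta)}$ and observing, via Mackey's formula, that the restriction of $(1_P)^G$ to $M$ is a positive multiple of the regular character (because $M \cap P^g = 1$ for all $g \in G$), one obtains an irreducible constituent $\chi$ of $(1_P)^G$ lying over $\theta$, and then $\chi(1)$, being a multiple of $\abs{G:I_G(\theta)}$ by the Clifford correspondence, is divisible by $p$.

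It remains to treat the case $p \mid \abs{M}$. Since $\rad{p}{G} = 1$, the subgroup $M$ is non-abelian, say $M = S_1 \times \cdots \times S_k$ with each $S_i$ isomorphic to a non-abelian simple group $S$ with $p \in \pi(S)$. Here I would run the two-case analysis from the proof of Proposition~\ref{lemma:minimalsubgroup} with $H = P$ and $\beta = 1_P$, but tracking divisibility of degrees instead of vanishing. If $S$ has a character of $p$-defect zero, then $\eta = \theta \times \cdots \times \theta \in \irr{M}$ is again of $p$-defect zero, the character $\chi \in \irr{G}$ built there is a constituent of $(1_P)^G$ lying over $\eta$, and since $\eta(1) \mid \chi(1)$ by Clifford's theorem while $\eta(1)_p = \abs{M}_p > 1$, we conclude $p \mid \chi(1)$. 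If $S$ has no $p$-defect-zero character, one again reduces modulo $M$ whenever $PM$ is not normal in $G$; when $PM \unlhd G$, one works with the extension $\hat{\eta} \in \irr{G}$ of $\eta$ provided by Lemma~\ref{BianchiLemma}, determining through Gallagher's theorem which of the products $\hat{\eta}\rho$, $\rho \in \irr{G/M}$, are constituents of $(1_P)^G$, in search of one with $p \mid \rho(1)$.

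The main difficulty, as I see it, is precisely this last sub-case, and it is where the genuine use of the classification behind \cite[Theorem~B]{MN} enters: Lemma~\ref{2.1MN} supplies only a constituent of $(1_P)^S$ that vanishes on a $p$-element, not one of degree divisible by $p$, so one needs the stronger, classification-dependent statement that for every non-abelian simple group $S$ with $p \mid \abs{S}$ the permutation character $(1_P)^S$ possesses an irreducible constituent of degree divisible by $p$. With that ingredient the bookkeeping above closes; lacking it, the simple-group step is the obstruction. For this reason, within the present paper the economical route is to deduce (i) $\Leftrightarrow$ (ii) from Theorem~\ref{TEOB}, observe (i) $\Rightarrow$ (iii) directly, and invoke \cite[Theorem~B]{MN} for the remaining implication (iii) $\Rightarrow$ (i).
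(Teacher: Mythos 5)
The paper does not actually prove this result: it is quoted verbatim from \cite[Theorem B]{MN}, with only the remark that the equivalence (i)--(ii) is recovered from Theorem~\ref{TEOB} in the case $N=G$, and your proposal ultimately lands on exactly that route — (i)$\Leftrightarrow$(ii) via Theorem~\ref{teoB}, (i)$\Rightarrow$(iii) by the elementary Clifford argument, and a citation of \cite{MN} for (iii)$\Rightarrow$(i). Your partial sketch of (iii)$\Rightarrow$(i) is sound as far as it goes, and you correctly identify that the missing ingredient is the CFSG-dependent fact that $(1_P)^S$ has an irreducible constituent of degree divisible by $p$ for every non-abelian simple $S$ with $p\in\pi(S)$, which is indeed not among the lemmas reproduced in this paper (Lemma~\ref{2.1MN} as stated here only gives a constituent vanishing on a $p$-element).
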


\smallskip

\begin{example}
\label{degree}
(1) Note that in Theorem \ref{TEOB} we can have $\beta\in\irr{P/P_0}$ distinct from $1_{P}$, in contrast to Theorem \ref{MN}: Let $G$ be a symmetric group of degree $4$ and let $N$ be an alternating group of degree $4$. Take $P\in\syl{2}{G}$. Then there exists a non-trivial irreducible character $ \beta\in\irr{P}$ with $P_0=P\cap N\leqslant \op{ker}{\beta}$. Additionally, the irreducible constituents of $\beta^G$ do not vanish on the $p$-elements of $N$, so the hypotheses in Theorem \ref{TEOB} (iii) are fulfilled.

(2) The following equivalence, similar to Theorem \ref{MN} (i)-(iii), is not true: $P_0$ is a normal Sylow $p$-subgroup of $N$ if and only if $p$ does not divide $\chi(1)$ for all irreducible constituents of $(1_{P_0})^G$: Consider $G$ and $N$ as above. Then $(1_{P_0})^G$ has three distinct irreducible constituents, being one of them of degree $2$.

Both examples have been checked using the software \texttt{GAP} \cite{GAP}.
\end{example}

Let consider now a set of primes $\pi$ instead of a single prime $p$. As a consequence of Theorem \ref{teoA}, we give in the following proposition extra information on the structure of a $\pi$-complement of $G$ when $N$ contains a Hall $\pi$-subgroup of it.

\begin{proposition}
\label{nilphall}
Let $N$ be a normal subgroup of a group $G$ such that every prime power order $\pi$-element of $N$ is non-vanishing in $G$, for a set of primes $\pi$. Then $N$ has a nilpotent normal Hall $\pi$-subgroup.

Further, if $\abs{G:N}$ is a $\pi'$-number, then any $\pi$-complement $F$ of $G$ verifies that $F\ze{G}$ is self-normalising.
\end{proposition}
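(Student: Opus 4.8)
The plan is to derive the first claim from Theorem \ref{teoA} applied prime by prime, and then to deduce the self-normalising property from a Frattini-type argument. First I would fix a prime $p\in\pi$. Since every prime power order $\pi$-element of $N$ is non-vanishing in $G$, in particular every $p$-element of $N$ is non-vanishing in $G$; by Theorem \ref{teoA}, $N$ has a normal Sylow $p$-subgroup $P_p$. As this holds for each $p\in\pi$, the product $H=\prod_{p\in\pi}P_p$ is a normal subgroup of $N$ (being generated by normal subgroups), it is nilpotent (being a direct product of its Sylow subgroups, which are the $P_p$), and it is a Hall $\pi$-subgroup of $N$ since $\abs{H}=\prod_{p\in\pi}\abs{P_p}$ is exactly the $\pi$-part of $\abs{N}$. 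Hence $N$ has a nilpotent normal Hall $\pi$-subgroup.

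For the second assertion, assume $\abs{G:N}$ is a $\pi'$-number, and let $F$ be a $\pi$-complement of $G$; I want to show $\no{G}{F\ze{G}}=F\ze{G}$. Write $H$ for the nilpotent normal Hall $\pi$-subgroup of $N$ obtained above. Since $\abs{G:N}$ is a $\pi'$-number, $H$ is in fact a Hall $\pi$-subgroup of $G$; because $H\unlhd N\unlhd G$, by Frattini's argument $G=N\no{G}{H}$, and since $H\leqslant N$ we get $G=H\no{G}{H}$, so $H\unlhd G$ (one sees $\no{G}{H}=G$). Thus $H$ is a normal Hall $\pi$-subgroup of $G$, and $F$ is a complement to it; by the Schur--Zassenhaus theorem all such complements are conjugate. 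Now I would take $g\in\no{G}{F\ze{G}}$. The subgroup $F\ze{G}$ has a unique Hall $\pi'$-subgroup, obtained as $O_{\pi'}(F\ze{G})$ or directly as the set of $\pi'$-elements of the nilpotent-by-central group $F\ze{G}$ — more carefully, $F$ itself is the unique $\pi$-complement of $F\ze{G}$ that is normal in it precisely when $\ze{G}_\pi\leqslant F$, which holds since $\ze{G}_\pi\leqslant H\cap\ze{G}$ and one checks $\ze{G}_\pi$ is centralised by $F$; in any case $F$ is characteristic in $F\ze{G}$. Hence $g$ normalises $F$, i.e. $g\in\no{G}{F}$. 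Finally $\no{G}{F}=F\ce{G}{F}$ is no bigger than $F\ze{G}$: since $G=HF$ with $H\unlhd G$, any element centralising $F$ lies in $H F$ and its $H$-component must centralise $F$, but $\ce{H}{F}\leqslant\ze{G}$ because $G=HF$ acts on $\ce{H}{F}$ with $H$ acting by conjugation within the abelian-ish structure — more precisely $\ce{H}{F}\unlhd G$ and $H$ centralises it as $H$ is abelian on that piece, giving $\ce{H}{F}\leqslant\ze{G}$. Therefore $\no{G}{F}=F\ce{G}{F}\leqslant F\ze{G}$, and combined with the trivial reverse inclusion, $\no{G}{F\ze{G}}\leqslant\no{G}{F}\leqslant F\ze{G}\leqslant\no{G}{F\ze{G}}$, so equality holds throughout.

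The main obstacle I anticipate is the last step: showing cleanly that $\ce{H}{F}\leqslant\ze{G}$ and that $F$ is characteristic (or at least sufficiently canonical) inside $F\ze{G}$ so that normalising $F\ze{G}$ forces normalising $F$. The cleanest route is probably to observe that $H$ nilpotent normal Hall $\pi$ in $G$ forces $G=H\rtimes F$ up to the central factor, identify $\ze{G}\cap H=\ce{H}{F}\cap\ze{G}$ carefully using coprimality and $G=HF$, and then argue that $[\ce{H}{F},F]=1$ together with $[\ce{H}{F},H]=1$ (the latter because the relevant part is central in the nilpotent $H$ — this needs a small coprime-action argument, possibly invoking that $\ce{H}{F}$ is normal in $G=HF$ and $H$ acts trivially on it since $H/\ce_H$ ...). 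An alternative, slicker approach would be to pass to $G/\ze{G}$ at the outset and show $F\ze{G}/\ze{G}$ is self-normalising there using that it is a self-centralising Hall $\pi'$-subgroup of the $\pi$-separable group $G/\ze{G}$, invoking that a nilpotent normal Hall subgroup has self-normalising complement — I would check whether a citable lemma of this kind is available rather than reproving it.
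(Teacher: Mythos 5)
Your first paragraph is correct and is exactly the paper's route: apply Theorem \ref{teoA} to each $p\in\pi$ and take the product of the resulting normal Sylow subgroups.

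The second half has the right skeleton but a genuine gap at precisely the step you flag. You correctly reduce to showing $\ce{H}{F}\leqslant\ze{G}$ (your reduction $\no{G}{F\ze{G}}\leqslant\no{G}{F}=F\ce{H}{F}$, via $F$ being the normal, hence characteristic, Hall $\pi'$-subgroup of $F\ze{G}$ and $[\no{H}{F},F]\leqslant H\cap F=1$, is fine and essentially what the paper does). But the inclusion $\ce{H}{F}\leqslant\ze{G}$ is \emph{not} a group-theoretic consequence of $H$ being a nilpotent normal Hall $\pi$-subgroup with complement $F$: it requires $\ce{H}{F}\leqslant\ze{H}$, and a nilpotent $H$ is not ``abelian on that piece''. (Take $G=H=Q_8$, $F=1$: then $\ce{H}{F}=H\not\leqslant\ze{G}$. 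This $G$ does not satisfy the hypothesis of the proposition, which is exactly the point: your argument for this step never uses the non-vanishing hypothesis, so it cannot work.) None of your proposed fixes (coprime action, passing to $G/\ze{G}$, citing that a complement to a nilpotent normal Hall subgroup is self-normalising modulo the centre) can succeed, because the underlying group-theoretic statement is false.

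The missing idea is character-theoretic and is where the hypothesis enters. For a prime power order element $x\in\ce{H}{F}$ one has $G=HF=H\ce{G}{x}$; Lemma \ref{gru} (Gr\"uninger) then says $x$ is non-vanishing in $G$ if and only if it is non-vanishing in $H$. By hypothesis $x\notin\van{G}$, so $x\notin\van{H}$, and since $H$ is nilpotent Proposition \ref{nilp} forces $x\in\ze{H}$. Hence $x\in\ze{H}\cap\ce{G}{F}\leqslant\ze{G}$ because $G=HF$. As $\ce{H}{F}$ is generated by its prime power order elements, $\ce{H}{F}\leqslant\ze{G}$, and your chain of inclusions then closes as you intended.
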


\begin{proof}
Certainly, in virtue of Theorem \ref{teoA} we have that $N$ has a nilpotent normal Hall $\pi$-subgroup, say $H$. In fact, if $\abs{G:N}$ is not divisible by any prime in $\pi$, then $H$ is a normal Hall $\pi$-subgroup of $G$. Let $F$ be a $\pi$-complement of $H$ in $G$, so $G=HF$. We aim to show that $F\ze{G}=\no{G}{F\ze{G}}$. Take a prime power order element $x\in\no{H}{F\ze{G}}$. Then $F^x\ze{G}=(F\ze{G})^x=F\ze{G}$, so there exists some $y\in F\ze{G}$ such that $F^x=F^y=F$. Thus $x\in \no{H}{F}\leqslant \ce{H}{F}$ because $[\no{H}{F}, F]\leqslant H\cap F=1$. Therefore $G=HF=H\ce{G}{x}$. Since $x\notin \van{G}$ by assumption, then Lemma \ref{gru} yields that $x\notin \van{H}$. Now Proposition \ref{nilp} applies because $H$ is nilpotent, so $x\in \ze{H}\cap\ce{G}{F}\leqslant\ze{G}$. As this argument is valid for every prime power order element in $\no{H}{F\ze{G}}$, then $\no{H}{F\ze{G}}\leqslant \ze{G}$. Finally, note that $\no{G}{F\ze{G}}= \no{G}{F\ze{G}} \cap HF = F(\no{H}{F\ze{G}})=F\ze{G}$, as wanted.
\end{proof}

\begin{corollary}
\label{cor-dpss}
Let $G$ be a group such that all the $p$-elements are non-vanishing. Then $G$ has a normal Sylow $p$-subgroup, and $F\ze{G}$ is self-normalising for any $p$-complement $F$ of $G$.
\end{corollary}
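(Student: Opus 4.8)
The plan is to derive Corollary~\ref{cor-dpss} as a straightforward specialisation of Proposition~\ref{nilphall}. First I would set $\pi=\{p\}$. The hypothesis that every $p$-element of $G$ is non-vanishing in $G$ says, in the language of Proposition~\ref{nilphall}, that every prime power order $\pi$-element of $N:=G$ is non-vanishing in $G$ (when $\pi=\{p\}$, a $\pi$-element of prime power order is just a nontrivial $p$-element, and the trivial element is vacuously non-vanishing). Taking $N=G$, Proposition~\ref{nilphall} immediately yields that $G$ has a nilpotent normal Hall $\pi$-subgroup, i.e.\ a normal Sylow $p$-subgroup $P$.

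Next I would invoke the second assertion of Proposition~\ref{nilphall}. With $N=G$ the index $\abs{G:N}=1$ is trivially a $\pi'$-number, so the proposition tells us that any $\pi$-complement $F$ of $G$ satisfies $F\ze{G}=\no{G}{F\ze{G}}$. Since a $\pi$-complement of $G$ is precisely a $p$-complement of $G$, this says $F\ze{G}$ is self-normalising for every $p$-complement $F$ of $G$, which is exactly the second claim of the corollary.

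The only point that requires a word of care — and hence the ``main obstacle,'' such as it is — is matching the quantifier in the hypothesis: Proposition~\ref{nilphall} speaks of \emph{prime power order} $\pi$-elements, whereas the corollary speaks of all $p$-elements. I would note that these coincide: every nontrivial $p$-element has prime power order, and the identity is non-vanishing in any group, so ``all $p$-elements are non-vanishing'' is equivalent to ``all prime power order $p$-elements are non-vanishing.'' With that remark in place the corollary is an immediate consequence of Proposition~\ref{nilphall}, and no further argument is needed.

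\begin{proof}
Apply Proposition~\ref{nilphall} with $N=G$ and $\pi=\{p\}$. The prime power order $\pi$-elements of $G$ are exactly its nontrivial $p$-elements, and the identity is non-vanishing in $G$; hence the assumption that all $p$-elements of $G$ are non-vanishing in $G$ is precisely the hypothesis of Proposition~\ref{nilphall}. It follows that $G$ has a nilpotent normal Hall $\pi$-subgroup, that is, a normal Sylow $p$-subgroup. Moreover $\abs{G:N}=1$ is a $\pi'$-number, so by the second part of Proposition~\ref{nilphall} every $\pi$-complement $F$ of $G$ satisfies $F\ze{G}=\no{G}{F\ze{G}}$. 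Since the $\pi$-complements of $G$ are exactly its $p$-complements, the result follows.
\end{proof}
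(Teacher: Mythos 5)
Your proposal is correct and matches the paper's intent exactly: Corollary~\ref{cor-dpss} is stated as an immediate specialisation of Proposition~\ref{nilphall} with $N=G$ and $\pi=\{p\}$, which is precisely what you do. The remark about matching the quantifiers (all $p$-elements versus prime power order $\pi$-elements) is a harmless and accurate clarification.
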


\section{Lenghts of \texorpdfstring{$G$}{G}-conjugacy classes of vanishing elements}

We start by showing an extension of Lemma \ref{berkokazarin} for a set of primes $\pi$ and a $G$-conjugacy class. The proof is inspired by \cite[Theorem C]{BF} under the weaker hypothesis of the $\pi$-separability of the normal subgroup $N$.

\begin{proposition}
\label{in_fitting}
Let $N$ be a normal $\pi$-separable subgroup of a group $G$. If $x \in N$ is such that $\abs{x^G}$ is a $\pi$-number, then $[x^G,x^G] \leqslant \rad{\pi}{N}$. In particular, $x\rad{\pi}{N}/\rad{\pi}{N}\in\ze{\fit{N/\rad{\pi}{N}}}$.

Indeed, if $\pi$ consists of a single prime $p$, then the same statement is valid even if $N$ is not $p$-soluble.
\end{proposition}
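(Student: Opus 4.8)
The plan is to reduce to the normal closure $K = \langle x^G \rangle$ and exploit $\pi$-separability together with coprime action. First I would pass to $\overline{N} = N/\rad{\pi}{N}$; since $\rad{\pi}{\overline{N}} = 1$ and the hypothesis $\abs{x^G}$ being a $\pi$-number is inherited (using that $\abs{\overline{x}^{\,\overline{G}}}$ divides $\abs{x^G}$), it suffices to show $[x^G, x^G] = 1$, i.e. that $K$ is abelian, under the extra assumption $\rad{\pi}{N} = 1$. By $\pi$-separability of $N$ and $\rad{\pi}{N} = 1$ we have a non-trivial $\rad{\pi'}{N}$, and I would let $V = \rad{\pi'}{N}$, a normal $\pi'$-subgroup of $G$. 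The key point is that $x$ centralises $V$: indeed, since $\abs{x^G}$ is a $\pi$-number and $\abs{V}$ is a $\pi'$-number, and $V \unlhd G$, a standard argument (as in Lemma \ref{wielandt}, noting $\abs{x^N}$ divides $\abs{x^G}$) gives $N = H\ce{N}{x}$ for $H \in \hall{\pi}{N}$; combined with $V \leqslant O^{\pi}(N)$ one deduces $[x, V] = 1$, hence $K = \langle x^G \rangle \leqslant \ce{G}{V}$.

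Next I would run an inductive/minimal-counterexample argument on $\abs{N}$ (or on $\abs{K}$). Consider $W = V \cap \ze{KV}$; by coprime action of the $\pi$-group $K/\ce{K}{V}$... actually, more cleanly: since $K$ centralises $V$ and $V$ is a non-trivial normal $\pi'$-subgroup, I would look at $KV/V$ inside $G/V$. One checks the hypotheses descend to $G/V$: $\abs{(xV)^{G/V}}$ is still a $\pi$-number, $N/V$ is $\pi$-separable, and $\rad{\pi}{N/V}$ corresponds to a normal subgroup we can handle. By induction $[x^G, x^G] \leqslant \rad{\pi}{N/V}$ in $G/V$; but $\rad{\pi}{N/V}$ is a $\pi$-group while $[x^G, x^G] \leqslant K \leqslant \ce{G}{V}$ and $V$ is the full $\pi'$-radical — pulling back, $[x^G, x^G] V / V$ is a $\pi$-group, so $[x^G, x^G]$ is a $\{\pi\text{-by-}\pi'\}$ group with the $\pi'$-part inside $V$. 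A short coprime-action argument ($x$ centralises $V$, so $K$ normalises and the $\pi'$-Hall part of $[x^G,x^G]$ lies in $V \cap K' $ which one forces to be central, hence trivial) then yields that $[x^G, x^G]$ is a $\pi$-group, so $[x^G,x^G] \leqslant \rad{\pi}{N}$, contradicting $\rad{\pi}{N} = 1$ unless $[x^G, x^G] = 1$. This gives the first assertion; the second follows since $\fit{N/\rad{\pi}{N}}$ normalises the abelian $K/\rad{\pi}{N} \unlhd N/\rad{\pi}{N}$ and conversely $x$ lies in the centre of this normal abelian subgroup, which one shows is contained in $\ze{\fit{\cdot}}$ by the usual argument that a normal nilpotent subgroup centralises $\fit{}$ together with $C(\fit{}) \leqslant \fit{}$ in the $\pi'$-radical-free quotient.

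For the single-prime case, one replaces the appeal to $\pi$-separability by Lemma \ref{berkokazarin}: if $\abs{x^G}$ is a $p$-power then $[x^G, x^G]$ is a $p$-group outright, so $[x^G, x^G] \leqslant \rad{p}{\langle x^G \rangle} \leqslant \rad{p}{N}$ with no solvability hypothesis needed, and the $\ze{\fit{}}$-conclusion is then automatic.

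The main obstacle I anticipate is the bookkeeping in the inductive step — specifically, ensuring that after quotienting by $\rad{\pi'}{N}$ the residual normal $\pi$-part of $[x^G, x^G]$ really does land in $\rad{\pi}{N}$ rather than merely in $\rad{\pi, \pi'}{N}$, which forces one to iterate the reduction (peel off $\pi'$-radical, then $\pi$-radical, then $\pi'$ again) up the $\pi$-separable series of $N$. Handling this cleanly, rather than by brute iteration, is where I would invest the most care, probably by phrasing the whole argument as: $K = \langle x^G\rangle$ centralises $\rad{\pi'}{N}$, hence $K \leqslant \ce{N}{\rad{\pi'}{N}} \leqslant \radd{\pi'}{\pi}{N}$ (using $\pi$-separability and $\ce{}{\rad{\pi'}{}} \leqslant \radd{\pi'}{\pi}{}$ in the $\pi$-separable setting), and then $[K, K] \leqslant [\,\radd{\pi'}{\pi}{N}, \radd{\pi'}{\pi}{N}\,] \leqslant \rad{\pi'}{N}$; intersecting with the fact that $[K,K]$ also centralises $\rad{\pi'}{N}$ forces $[K,K] \cap \rad{\pi'}{N} \leqslant \ze{\rad{\pi'}{N}}$, and a final coprime-commutator identity kills it modulo $\rad{\pi}{N}$.
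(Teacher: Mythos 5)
Your skeleton matches the paper's: reduce to $\rad{\pi}{N}=1$, use $\pi$-separability to get $V:=\rad{\pi'}{N}\neq 1$, note that $\abs{x^V}$ divides both the $\pi$-number $\abs{x^G}$ and the $\pi'$-number $\abs{V}$ so that $K:=\langle x^G\rangle$ centralises $V$, induct modulo $V$, and dispatch the single-prime case via Lemma \ref{berkokazarin}. The gap is in the step meant to close the argument. First, the $\pi/\pi'$ bookkeeping is reversed: induction in $G/V$ gives $K'V/V\leqslant \rad{\pi}{N/V}$, and since $K'\cap V$ is central in $K'$ (because $K\leqslant\ce{G}{V}$), Schur--Zassenhaus splits $K'=(K'\cap V)\times Q$ with $Q$ a characteristic Hall $\pi$-subgroup; then $Q\unlhd G$, $Q\leqslant N$ force $Q\leqslant\rad{\pi}{N}=1$, so what survives is that $K'$ is a $\pi'$-group inside $V$, central in $K$ --- not, as you assert, that $K'$ is a $\pi$-group. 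Second, and decisively, ``$V\cap K'$ is central, hence trivial'' is a non sequitur: centrality in $K$ (or in $V$) does not kill it. What actually finishes the proof --- and is the heart of the paper's argument --- is absent from your write-up: once $K'\leqslant\ze{K}$, the normal subgroup $K$ is nilpotent; a nilpotent normal subgroup of $G$ contained in $N$ with $\rad{\pi}{N}=1$ is a $\pi'$-group; hence $\abs{x^K}$ divides the coprime numbers $\abs{K}$ and $\abs{x^G}$, so $x\in\ze{K}$; and since $\ze{K}\unlhd G$ this gives $K=\langle x^G\rangle\leqslant\ze{K}$, i.e.\ $K$ abelian. Without this coprimality endgame you only reach ``$K$ nilpotent of class $2$'', which is not the conclusion. (The paper organises the same ideas by first reducing, via minimality, to the case $N=\langle x^G\rangle$, showing $\rad{\pi'}{N}\leqslant\ze{N}$, deducing $N'\leqslant\rad{\pi'}{N}$ by induction, and then running exactly this endgame.)

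Your fallback sketch in the last paragraph does not repair this. The containment $[\rad{\pi',\pi}{N},\rad{\pi',\pi}{N}]\leqslant\rad{\pi'}{N}$ is false in general, since $\rad{\pi',\pi}{N}/\rad{\pi'}{N}$ is a $\pi$-group but need not be abelian. Ironically, the correct form of the Hall--Higman-type lemma you allude to is stronger than what you quote and would yield an immediate proof: for $\pi$-separable $N$ with $\rad{\pi}{N}=1$ one has $\ce{N}{\rad{\pi'}{N}}\leqslant\rad{\pi'}{N}$, whence $K\leqslant\ce{N}{V}\leqslant V$ is already a $\pi'$-group and the coprimality of $\abs{x^K}$ with $\abs{x^G}$ at once gives $K\leqslant\ze{K}$. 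Your treatment of the reduction, of the ``in particular'' clause, and of the single-prime case via Lemma \ref{berkokazarin} is essentially in order.
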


\begin{proof}
In order to prove the first claim, let us consider a counterexample which mini\-mises $\abs{G}+\abs{N}$. One can clearly assume $\rad{\pi}{N}=1$, so we aim to get the contradiction $[x^G,x^G]=1$. Let us suppose firstly that $\langle x \rangle$ is subnormal in $G$. Then $x \in \fit{G}$. As $\fit{G}$ is a $\pi'$-group and $\abs{x^G}$ is a $\pi$-number, then clearly $x \in \ze{\fit{G}}$ and $\langle x^G\rangle \leqslant \ze{\fit{G}}$, so $[x^G,x^G]=1$.

Next we assume that the normal subgroup $M:=\langle x^G\rangle$ is proper in $N$. Then by minimality we obtain $[x^M,x^M]=1$, and it follows that $x \in \ze{\langle x^M \rangle}$. In particular, $\langle x \rangle$ is subnormal in $M$, and therefore in $G$, which contradicts the previous paragraph. Hence $M=N$. 

Let $K:=\rad{\pi'}{N}$. Since $N$ is $\pi$-separable, then $K$ is non-trivial. It follows from the class size hypothesis that $K$ centralises $x^G$, so $K$ is central in $N=\langle x^G\rangle$. As $[x^G,x^G]K/K \leqslant \rad{\pi}{N/K}$ by minimality, and $\rad{\pi}{N/K}=\rad{\pi}{N}K/K$ because $K$ is central $N$, we deduce $[x^G,x^G]=N' \leqslant K\leqslant \ze{N}$. Therefore $N$ is a nilpotent $\pi'$-group. Since $\abs{x^G}$ is a $\pi$-number, we obtain $x\in \ze{N}$ and $[x^G,x^G]=1$.

Next we concentrate on the second assertion. Let $\overline{G}:=G/\rad{\pi}{N}$. Then, $[\overline{x}^{\overline{G}},\overline{x}^{\overline{G}}]=1$ by the first claim. It follows that $\langle \overline{x}\rangle \unlhd \ze{\langle \overline{x}^{\overline{G}} \rangle} \unlhd \overline{G}$, so $\langle \overline{x}\rangle\leqslant \fit{\overline{G}} \cap \overline{N} \leqslant \fit{\overline{N}}$. As $\fit{\overline{N}}$ is a normal $\pi'$-subgroup of $\overline{G}$ and $|\overline{x}^{\overline{G}}|$ is a $\pi$-number, then necessarily $\overline{x}\in \ze{\fit{\overline{N}}}$.

Finally, observe that the last statement follows from Lemma \ref{berkokazarin}, since $[x^G, x^G]\leqslant \rad{p}{G}\cap N\leqslant \rad{p}{N}$.
\end{proof}

\begin{example}
Note that the $\pi$-separability assumption in the previous result cannot be removed, even when $N=G$: Consider any non-trivial element in the centre of a Sylow $p$-subgroup of a non-abelian simple group and $\pi=p'$, for a prime divisor $p$ of its order.
\end{example}

For a normal subgroup $N$ of a group $G$, note that if $xN$ is a vanishing (prime power order) element of $G/N$, then we can assume that $x$ is also a vanishing (prime power order) element of $G$. This is because there exists a bijection between $\op{Irr}(G/N)$ and the set of all characters in $\op{Irr}(G)$ containing $N$ in their kernel. This fact will be used in the sequel with no reference.

As an application of the above proposition and mainly Theorem \ref{teoA}, we prove Theorem \ref{teoC} in the Introduction.

\bigskip

\begin{proof}[Proof of Theorem \ref{teoC}]
(1) Assume that $N$ is $\pi$-separable, and that $\abs{x^G}$ is a $\pi'$-number for every prime power order $\pi$-element $x \in \van{G} \cap N$. Let us prove that $N/\rad{\pi'}{N}$ has a normal Sylow $p$-subgroup for each prime $p\in\pi$. Certainly, whenever $\rad{\pi'}{N}\neq 1$, the assertion follows by induction, considering the groups $G/\rad{\pi'}{N}$ and $N/\rad{\pi'}{N}$. Therefore we may assume that $\rad{\pi'}{N}=1$. Let $Z_p:=\ze{\rad{p}{N}}$. In virtue of Proposition \ref{in_fitting}, we have that all the $p$-elements of $\van{G} \cap N$ lie in $\ze{\fit{N}}$, and thus in $Z_p$. Therefore, if we denote $\overline{G}:=G/Z_p$, then it follows that no prime power order $p$-element of $\overline{N}$ is vanishing in $\overline{G}$. Now Theorem \ref{teoA} yields that $\overline{N}$ has a normal Sylow $p$-subgroup $\overline{P}$, where $P\in\syl{p}{N}$. Since $Z_p$ is a $p$-group, then $P$ is normal in $N$ clearly and we get the claim. As this is valid for each prime $p\in \pi$, then $N/\rad{\pi'}{N}$ has a nilpotent normal Hall $\pi$-subgroup, as wanted.

(2) Assume that $N$ has Hall $\pi$-subgroups, and that $\abs{x^G}$ is a $\pi$-number for every prime power order $\pi$-element $x \in \van{G} \cap N$. We claim that $N$ has a normal Hall $\pi$-subgroup. Clearly we may assume $\rad{\pi}{N}=1$. Let $H\in\hall{\pi}{N}$, and let $p\in \pi$. If $x\in N\cap\van{G}$ is a $p$-element, then $x\in P\in\syl{p}{N}$. Hence there exists $g\in N$ such that $x^g \in P^g\in\syl{p}{H}$. Now Lemma \ref{wielandt} yields $x^g\in \rad{\pi}{N}=1$. Thus there are no $p$-elements in $N\cap \op{Van}(G)$, and by Theorem \ref{teoA} we get that $N$ has a normal Sylow $p$-subgroup. Since this is valid for every prime $p\in \pi$, then $N$ has a (nilpotent) normal Hall $\pi$-subgroup, as desired.

Next we show that $N$ has nilpotent Hall $\pi'$-subgroups under the additional assump\-tion that the prime power order $\pi'$-elements in $N\cap \van{G}$ have also $G$-class sizes not divisible by any prime in $\pi'$. Note that $N$ is $\pi$-separable because it has a normal Hall $\pi$-subgroup, say $H$. If we take any prime power order element $xH\in (N/H) \cap \van{G/H}$, then we may suppose that $x\in N\cap \van{G}$ is a prime power order element, so by assumptions $\abs{x^G}$ is a $\pi$-number. Thus $\abs{(xH)^{G/H}}$ is also a $\pi$-number. Therefore every $\abs{(xH)^{G/H}}$ is a $\pi$-number for each prime power order $\pi'$-element $xH\in (N/H) \cap \van{G/H}$, so by assertion (1) the $\pi'$-group $N/H$ is nilpotent. Since $N/H$ is isomorphic to a Hall $\pi'$-subgroup of $N$, the proof is completed.
\end{proof}

\begin{example}
\label{pi-separa}
We remark that the $\pi$-separability assumption in Theorem \ref{teoC} (1) is necessary for the first claim. Let $G$ be a symmetric group of degree $5$, and let $N$ be an alternating group of degree $5$. Consider $\pi=\{3\}$. Then all the $3$-elements in $N\cap \van{G}$ have conjugacy class size equal to $20$. Nevertheless, $N/\rad{\pi'}{N}=N$ does not have a normal Sylow $3$-subgroup.
\end{example}

\begin{example}
It is not difficult to find groups satisfying the assumptions of Theorem \ref{teoC}. For instance, let $G=A\Gamma(2^3)$ be an affine semilinear group of order $168$, and let $N$ be the Hall $3'$-subgroup of $G$. If we consider $\pi=\{7\}$, then the pair $(N, G)$ satisfies the hypotheses of Theorem \ref{teoC} (1). Concerning Theorem \ref{teoC} (2), if $\pi$ is any set of prime numbers, $G=\rad{\pi}{G}\times\rad{\pi'}{G}$ and $N=\rad{\pi}{G}$, then the pair $(N, G)$ certainly holds the hypotheses. 
\end{example}

The next theorem combines the arithmetical conditions of Theorem \ref{teoC} on the vani\-shing $G$-class sizes.

\begin{theorem}
\label{pi-pi'}
Let $N$ be a normal $\pi$-separable subgroup of a group $G$. Assume that $\abs{x^G}$ is either a $\pi$-number or a $\pi'$-number for every prime power order $\pi$-element $x \in \van{G} \cap N$. Then $N/\rad{\pi'}{N}$ has a normal Hall $\pi$-subgroup. Thus $N$ has $\pi$-length at most 1.
\end{theorem}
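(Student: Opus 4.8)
The plan is to fuse the two mechanisms behind Theorem \ref{teoC}. The crucial observation is that a prime power order $\pi$-element is a $p$-element for a \emph{single} prime $p\in\pi$, so the dichotomy in the hypothesis applies separately to each such element; and — as I will check — \emph{both} alternatives confine the element to $\rad{\pi}{N}$ once $\rad{\pi'}{N}$ has been factored out. After that, Theorem \ref{teoA} does the rest.

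First I would reduce to the case $\rad{\pi'}{N}=1$. Since $\rad{\pi'}{N/\rad{\pi'}{N}}=1$, it suffices to check that the hypotheses descend to the pair $(G/\rad{\pi'}{N},\, N/\rad{\pi'}{N})$: $\pi$-separability is inherited by quotients; a prime power order $\pi$-element and a vanishing element of the quotient lift to ones of the same kind in $G$ (the former via the standard lifting of $p$-elements modulo a normal subgroup, the latter by inflation of characters); and if $\abs{x^G}$ is a $\pi$-number (resp. a $\pi'$-number) then so is the smaller class size of the image of $x$. Thus it is enough to prove that, when $\rad{\pi'}{N}=1$, the group $N$ has a normal Hall $\pi$-subgroup (necessarily $\rad{\pi}{N}$); the general statement and the bound $\pi$-length at most $1$ then follow at once.

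So assume $\rad{\pi'}{N}=1$ and let $x\in\van{G}\cap N$ be a prime power order $\pi$-element, say a $p$-element with $p\in\pi$. If $\abs{x^G}$ is a $\pi$-number, then, $N$ being $\pi$-separable, $x$ lies in some Hall $\pi$-subgroup of $N$ after replacing it by a suitable $N$-conjugate (which does not change $\abs{x^G}$), and Lemma \ref{wielandt} forces it into $\rad{\pi}{N}$; as $\rad{\pi}{N}\unlhd G$, the original $x$ lies there too. If instead $\abs{x^G}$ is a $\pi'$-number, then, $N$ being also $\pi'$-separable, Proposition \ref{in_fitting} applied with the roles of $\pi$ and $\pi'$ interchanged yields $x\in\ze{\fit{N}}$ (here $\rad{\pi'}{N}=1$ is used), and since $x$ is a $p$-element of the nilpotent group $\fit{N}$ it lies in $\rad{p}{N}\leqslant\rad{\pi}{N}$. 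In either case $x\in\rad{\pi}{N}$. Now pass to $\overline{G}=G/\rad{\pi}{N}$ and $\overline{N}=N/\rad{\pi}{N}$, so $\rad{\pi}{\overline{N}}=1$. For any $p\in\pi$, a nontrivial $p$-element of $\overline{N}$ that were vanishing in $\overline{G}$ would lift to a vanishing $p$-element of $N$, hence to an element of $\rad{\pi}{N}$, i.e. to the identity — a contradiction. Thus no nontrivial $p$-element of $\overline{N}$ is vanishing in $\overline{G}$, and Theorem \ref{teoA} gives $\overline{N}$ a normal Sylow $p$-subgroup for every $p\in\pi$; being contained in $\rad{\pi}{\overline{N}}=1$, each is trivial, so $\overline{N}$ is a $\pi'$-group and $\rad{\pi}{N}$ is the desired normal Hall $\pi$-subgroup of $N$.

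I expect the only genuinely delicate point to be the $\pi'$-class-size alternative in the third paragraph: one must be sure that Proposition \ref{in_fitting} may be invoked with $\pi$ and $\pi'$ exchanged — legitimate precisely because $\pi$-separability is symmetric under $\pi\leftrightarrow\pi'$ — and then that the $p$-part of $\ze{\fit{N}}$ is contained in $\rad{p}{N}$. The rest (that all the relevant properties lift to and project from the quotients by $\rad{\pi'}{N}$ and by $\rad{\pi}{N}$) is routine bookkeeping.
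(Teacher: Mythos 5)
Your proof is correct and follows essentially the same route as the paper's: the $\pi$-number alternative is handled by Lemma \ref{wielandt}, the $\pi'$-number alternative by Proposition \ref{in_fitting} with $\pi$ and $\pi'$ interchanged, and Theorem \ref{teoA} is then applied to the quotient by the resulting normal $\pi$-subgroup. The only cosmetic difference is that you first reduce to $\rad{\pi'}{N}=1$, whereas the paper works directly with the preimage $\pmb{\op{O}}_{\pi',\pi}(N)$ and shows it contains a Sylow $p$-subgroup of $N$ for each $p\in\pi$.
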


\begin{proof}
First, we claim that $O:=\radd{\pi}{N}$ contains a Sylow $p$-subgroup of $N$, for a prime $p\in \pi$. Let $x \in \van{G} \cap N$ be a $p$-element. If $\abs{x^G}$ is a $\pi$-number, then $x$ lies in $\rad{\pi}{N}$ because of Lemma \ref{wielandt}, so clearly $x\in O$. If $\abs{x^G}$ is a $\pi'$-number, then by Proposition \ref{in_fitting} we get $x\rad{\pi'}{N}\in \fit{N/\rad{\pi'}{N}}$, and again $x$ lies in $O$. It follows that $\overline{N}:=N/O$ contains no vanishing $p$-element of $G/O$, so $\overline{N}$ has a normal Sylow $p$-subgroup $\overline{P}$ in virtue of Theorem \ref{teoA}. Since $p \in \pi$ and clearly $\rad{\pi}{\overline{N}}=1$, thus $\overline{P}=1$.

Therefore $O$ contains a Sylow $p$-subgroup of $N$ for every $p\in\pi$, and thus $O/\rad{\pi'}{N}$ is a Hall $\pi$-subgroup of $N/\rad{\pi'}{N}$.
\end{proof}

\bigskip

The main theorem of \cite{BF} examines groups such that all their $\pi$-elements have prime power class sizes. The next result is a ``vanishing version'' of that theorem for prime power order elements and in the context of $G$-conjugacy classes. 

\bigskip

\begin{theorem}
\label{teoE}
Let $N$ be a normal subgroup of a group $G$. Assume that $\abs{x^G}$ is a prime power for each prime power order $\pi$-element $x \in N$ that is vanishing in $G$. Then $N/\rad{\pi'}{\fit{N}}$ has a normal Hall $\pi$-subgroup.

In particular, if $\pi$ is the set of prime divisors of $\abs{N}$, then $N/\fit{N}$ is nilpotent. 
\end{theorem}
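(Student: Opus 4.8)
\textbf{Proof proposal for Theorem \ref{teoE}.}

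The plan is to set up an induction on $\abs{G}+\abs{N}$ and reduce to the case where $\rad{\pi'}{\fit{N}}=1$; after this reduction the goal is to produce a normal Hall $\pi$-subgroup of $N$. First I would pass to $\overline{G}=G/\rad{\pi'}{\fit{N}}$: by the remark preceding Theorem \ref{teoC}, a vanishing prime power order element of $\overline{N}$ lifts to one of $N$, and quotients of $G$-classes have $G/\!\rad{\pi'}{\fit{N}}$-class sizes dividing the original prime power ones, so the hypothesis is inherited. Hence we may assume $\rad{\pi'}{\fit{N}}=1$, equivalently $\fit{N}$ is a $\pi$-group (note $\fit{N}$ is characteristic in $N$, hence normal in $G$). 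Since $\fit{N}=\prod_{p\in\pi(N)}\rad{p}{N}$ and $\rad{p}{N}=1$ for $p\notin\pi$, this says $\fit{N}=\rad{\pi}{N}$.

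The core step is to show, for each prime $p\in\pi$, that $N/\rad{\pi}{N}$ has no vanishing $p$-element of $G/\rad{\pi}{N}$, so that Theorem \ref{teoA} forces $N/\rad{\pi}{N}$ to have a normal Sylow $p$-subgroup which, together with $p\in\pi$ and $\rad{\pi}{N/\rad{\pi}{N}}=1$, is trivial; doing this for all $p\in\pi$ yields $\rad{\pi}{N}\in\hall{\pi}{N}$, i.e. a normal Hall $\pi$-subgroup of $N$, as wanted. To establish the core step, take a $p$-element $x\in N\cap\van{G}$ (if none exists for a given $p$, there is nothing to prove). By hypothesis $\abs{x^G}$ is a prime power, say a power of a prime $q$. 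If $q\neq p$, I would combine Lemma \ref{berkokazarin} (so $[x^G,x^G]$ is a $q$-group) with the structure of $\fit{N}$: since $\langle x\rangle$ normalises $\langle x^G\rangle$ and $[x^G,x^G]$ is a normal $q$-subgroup, one analyses whether $x$ centralises $x^G$. If $\abs{x^G}$ is a $\pi'$-number — in particular if $q\notin\pi$ — then Proposition \ref{in_fitting} (applicable here since $\fit{N}$ being a $\pi$-group forces the relevant $\pi$-separability to be available in the quotient, or directly via the $p$-part statement of that proposition) gives $x\rad{\pi'}{N}\in\fit{N/\rad{\pi'}{N}}$; because $\rad{\pi'}{N}$ is a $\pi'$-group and $x$ is a $p$-element with $p\in\pi$, this puts $x$ into a Sylow $p$-subgroup of $\fit{N}=\rad{\pi}{N}$, so $x\in\rad{\pi}{N}$ and $x$ becomes trivial in the quotient, hence non-vanishing there vacuously. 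The remaining possibility is $\abs{x^G}$ a power of a prime $q\in\pi$ with $q\neq p$ (or $q=p$): then Lemma \ref{wielandt}, applied with the set $\pi$ and $H\in\hall{\pi}{N}$ containing a conjugate of $x$, gives $x^g\in\rad{\pi}{N}$ for some $g\in N$, hence $x\in\rad{\pi}{N}$ again. In every case $x$ lies in $\rad{\pi}{N}$, so there are genuinely no vanishing $p$-elements of $G/\rad{\pi}{N}$ inside $N/\rad{\pi}{N}$, completing the core step.

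I expect the main obstacle to be the bookkeeping in the case $q\in\pi$, $q\neq p$: one must be careful that Lemma \ref{wielandt} is invoked with the full set $\pi$ (so that "$\abs{x^G}$ is a $\pi$-number" holds, which it does since $q\in\pi$) rather than with $\{p\}$, and one must confirm that a $\pi$-element conjugate of the $p$-element $x$ does lie in a fixed Hall $\pi$-subgroup $H$ of $N$ — this uses that $\syl{p}{N}\neq\emptyset$ always and Sylow's theorem inside $H$. A secondary subtlety is ensuring Proposition \ref{in_fitting} can be applied in the $\pi'$-class-size subcase: since the first part of that proposition needs $N$ to be $\pi$-separable, I would instead use its displayed particular conclusion for a single prime, or observe that once $\abs{x^G}$ is a $q$-power with $q\notin\pi$ we only need the prime-$q$ version of in_fitting (the last sentence of Proposition \ref{in_fitting}), which is valid without any solubility hypothesis; this sidesteps the issue cleanly. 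Finally, the "in particular" clause follows by taking $\pi=\pi(N)$: then $\rad{\pi'}{\fit{N}}=1$ automatically, $N$ itself has a normal Hall $\pi$-subgroup equal to $N$, which is not quite the statement — rather, rerun the argument to see that every vanishing prime power order element of $N$ lands in $\fit{N}$, so $N/\fit{N}$ has no vanishing prime power order elements of $G/\fit{N}$ and Theorem \ref{teoA} (second assertion) gives $N/\fit{N}$ nilpotent.
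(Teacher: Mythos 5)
Your overall strategy --- push every vanishing prime power order $\pi$-element of $N$ into a canonical normal subgroup and then apply Theorem \ref{teoA} prime by prime to the quotient --- is close in spirit to the paper's, but two steps do not hold up. The more serious one is the subcase where $\abs{x^G}$ is a power of a prime $q\in\pi$ with $q\neq p$: there you invoke Lemma \ref{wielandt} with the full set $\pi$ and a Hall $\pi$-subgroup $H\in\hall{\pi}{N}$ containing a conjugate of $x$. Theorem \ref{teoE} carries no hypothesis guaranteeing $\hall{\pi}{N}\neq\emptyset$ (contrast with Theorem \ref{teoC}(2), where this is assumed explicitly), so this step fails for a general $N$; your ``main obstacle'' paragraph addresses how to get $x$ into $H$, not whether $H$ exists. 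The case can be repaired without Hall subgroups: by Lemma \ref{berkokazarin}, $[x^G,x^G]$ is a $q$-group, hence lies in $\rad{q}{N}\leqslant\rad{\pi}{N}$, so modulo $\rad{\pi}{N}$ the subgroup $\langle\overline{x}\rangle$ is subnormal, whence $\overline{x}\in\fit{\overline{N}}$ and, being a $p$-element, $\overline{x}\in\rad{p}{\overline{N}}\leqslant\rad{\pi}{\overline{N}}=1$. This is essentially the paper's move; its clincher in the analogous situation modulo $\fit{N}$ is Proposition \ref{brough}, which you never use.

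The second gap is the reduction to $\rad{\pi'}{\fit{N}}=1$. Passing to $\overline{G}=G/\rad{\pi'}{\fit{N}}$ does preserve the hypothesis, but it does not achieve $\rad{\pi'}{\fit{\overline{N}}}=1$, because $\fit{\overline{N}}$ can be strictly larger than $\fit{N}/\rad{\pi'}{\fit{N}}$ (take $N=C_7\rtimes C_3$ Frobenius and $\pi=\{2\}$: one quotient step gives $C_3$, whose Fitting subgroup is still a nontrivial $\pi'$-group). Your induction on $\abs{G}+\abs{N}$ then only yields a normal Hall $\pi$-subgroup of a further quotient, and the implication ``$M/\rad{\pi'}{\fit{M}}$ has a normal Hall $\pi$-subgroup $\Rightarrow M$ has one'' is false in general ($M=S_4$ with $\pi=\{3\}$). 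The same example shows the core claim cannot be run on $N$ itself: the $3$-cycles of $S_4$ are vanishing of class size $8$, yet $\rad{3}{S_4}=1$, so your case analysis genuinely needs $\fit{N}$ to be a $\pi$-group, and that is exactly what the reduction fails to deliver. The paper sidesteps both problems by proving that $N/\fit{N}$ has a normal Hall $\pi$-subgroup (by contradiction, via Proposition \ref{nilphall}, the single-prime part of Proposition \ref{in_fitting}, and Proposition \ref{brough}) and then descending to $N/\rad{\pi'}{\fit{N}}$ using only that $\fit{N}/\rad{\pi'}{\fit{N}}$ is a $\pi$-group; I would restructure your argument along those lines. Your treatment of the subcase $q\notin\pi$ via the single-prime statement of Proposition \ref{in_fitting}, and your derivation of the ``in particular'' clause from the second assertion of Theorem \ref{teoA}, are fine once the ambient reduction is corrected.
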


\begin{proof}
We claim that $\overline{N}:=N/\fit{N}$ has a normal Hall $\pi$-subgroup, and therefore $N/\rad{\pi'}{\fit{N}}$ so does because $\fit{N}/\rad{\pi'}{\fit{N}}$ is a $\pi$-group. Arguing by contradiction, and in virtue of Proposition \ref{nilphall}, we may assume that $\overline{N}\cap \van{\overline{G}}$ contains a non-trivial $q$-element for some prime $q\in \pi$, say $\overline{x}$. Hence we may suppose that $x\in (N\cap \van{G})\smallsetminus \fit{N}$ is a $q$-element. By assumptions, we have that $\abs{x^G}$ is a power of some prime $p$. Observe that, since $x\notin\fit{N}$, then $q\neq p$ due to Lemma \ref{wielandt}. Now the last statement of Proposition \ref{in_fitting} yields $(\langle x^G \rangle)'\leqslant \rad{p}{N}\leqslant \fit{N}$, so $\overline{\langle x \rangle}$ is a subnormal nilpotent subgroup of $\overline{N}$. It follows that $\overline{x}\in\fit{\overline{N}}$, and as $\overline{x}$ is a $q$-element, then $\overline{x}\in \rad{q}{\overline{N}}$. Now $\abs{\overline{x}^{\overline{G}}}$ is a multiple of $q$ by Proposition \ref{brough}, and then $\abs{x^G}$ so is, a contradiction. 

Finally, if $\pi=\pi(N)$, then with a similar argument we deduce that there is no prime power order element in $N/\fit{N}$ vanishing in $G/\fit{N}$. Hence Theorem \ref{teoA} applies and $N/\fit{N}$ is nilpotent.
\end{proof}

\section{Some consequences on vanishing conjugacy classes}
\label{secCor}

New interesting contributions on the lengths of vanishing classes of a group $G$ emerge from Theorem \ref{teoC}, Theorem \ref{pi-pi'} and Theorem \ref{teoE} when $N=G$.

\begin{theorem}
\label{cor_van_pi'}
Let $G$ be a $\pi$-separable group. If $\abs{x^G}$ is a $\pi'$-number for every prime power order $\pi$-element $x \in \van{G}$, then $G/\rad{\pi'}{G}$ has a nilpotent normal Hall $\pi$-subgroup. Therefore, $G$ has nilpotent Hall $\pi$-subgroups, and its $\pi$-length is at most 1.
\end{theorem}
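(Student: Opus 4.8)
The plan is to obtain Theorem \ref{cor_van_pi'} as the special case $N=G$ of Theorem \ref{teoC} (1), and then to upgrade the conclusion slightly. First I would observe that taking $N=G$ in Theorem \ref{teoC} (1) is immediate: a $\pi$-separable group $G$ is in particular a $\pi$-separable normal subgroup of itself, and the class-size hypothesis is exactly the one assumed here. Hence Theorem \ref{teoC} (1) gives at once that $G/\rad{\pi'}{G}$ has a nilpotent normal Hall $\pi$-subgroup, and in particular the Hall $\pi$-subgroups of $G$ are nilpotent.

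The only remaining point is the bound on the $\pi$-length. Write $\overline{G}=G/\rad{\pi'}{G}$ and let $\overline{H}$ be its normal Hall $\pi$-subgroup, which exists and is nilpotent by the previous paragraph. Then $\overline{G}/\overline{H}$ is a $\pi'$-group, so the normal series $1 \leqslant \rad{\pi'}{G}/\rad{\pi'}{G} \leqslant \overline{H} \leqslant \overline{G}$ lifts to a normal series $1 \leqslant \rad{\pi'}{G} \leqslant H \leqslant G$ of $G$ in which $\rad{\pi'}{G}$ is a $\pi'$-group, $H/\rad{\pi'}{G}$ is a $\pi$-group, and $G/H$ is a $\pi'$-group. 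By definition this shows that the $\pi$-length of $G$ is at most $1$.

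I do not expect any genuine obstacle here, since the substance is entirely contained in Theorem \ref{teoC} (1); the work is purely a matter of specialising $N=G$ and recording the $\pi$-length consequence. The only mild care needed is to note that $\rad{\pi'}{G}$ is indeed a normal $\pi'$-subgroup whose quotient carries the normal Hall $\pi$-subgroup, so that the three-term normal series witnessing $\pi$-length $\le 1$ really is normal in $G$ (the preimage in $G$ of a normal subgroup of $G/\rad{\pi'}{G}$ is normal in $G$). With that remark the proof is complete.
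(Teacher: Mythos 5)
Your proposal is correct and matches exactly what the paper intends: Theorem \ref{cor_van_pi'} is presented as the immediate specialisation $N=G$ of Theorem \ref{teoC} (1), and your additional remark deriving $\pi$-length at most $1$ from the normal series $1 \leqslant \rad{\pi'}{G} \leqslant H \leqslant G$ is the standard, correct way to record that consequence.
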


\begin{theorem}
\label{cor_van_pi}
Let $G$ be a finite group such that $\hall{\pi}{G}\neq \emptyset$. Assume $\abs{x^G}$ is a $\pi$-number for every prime power order $\pi$-element $x\in \op{Van}(G)$. Then $G$ has a normal Hall $\pi$-subgroup.

Further, if the prime power order $\pi'$-elements in $\van{G}$ have also class size a $\pi$-number, then the Hall $\pi'$-subgroups of $G$ are nilpotent.
\end{theorem}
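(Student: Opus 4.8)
The plan is to obtain this statement as the special case $N=G$ of Theorem \ref{teoC} (2). With $N=G$, the standing hypothesis $\hall{\pi}{N}\neq\emptyset$ becomes $\hall{\pi}{G}\neq\emptyset$; the requirement that $\abs{x^G}$ be a $\pi$-number for every prime power order $\pi$-element $x\in N$ vanishing in $G$ becomes precisely our hypothesis on the prime power order $\pi$-elements of $\van{G}$; and the conclusion that $N$ has a normal Hall $\pi$-subgroup is exactly the first assertion. Likewise, the supplementary clause of Theorem \ref{teoC} (2), which treats the prime power order $\pi'$-elements of $N$ lying in $\van{G}$, specialises for $N=G$ to the second assertion, on the nilpotency of the Hall $\pi'$-subgroups of $G$. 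So nothing is needed beyond matching the hypotheses.

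For the reader's convenience I would also record how the proof of Theorem \ref{teoC} (2) reads in this case. After reducing to $\rad{\pi}{G}=1$, fix $H\in\hall{\pi}{G}$ and a prime $p\in\pi$. Any prime power order $p$-element of $\van{G}$ lies in some Sylow $p$-subgroup of $G$, hence is $G$-conjugate into a Sylow $p$-subgroup of $H$; since its $G$-class size is a $\pi$-number, Lemma \ref{wielandt} places it inside $\rad{\pi}{G}=1$. Thus $G$ has no prime power order $p$-element that is vanishing in $G$, and Theorem \ref{teoA} supplies a normal Sylow $p$-subgroup of $G$. Running this over all $p\in\pi$ yields a normal Hall $\pi$-subgroup $H\unlhd G$, which is nilpotent, being the internal direct product of the normal Sylow subgroups $\rad{p}{G}$ with $p\in\pi$. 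For the last clause, the quotient $G/H$ is a $\pi'$-group, and every prime power order element of $\van{G/H}$ pulls back to a prime power order $\pi'$-element of $\van{G}$, whose $G$-class size is a $\pi$-number by hypothesis; hence $G/H$, with respect to the set of primes $\pi'$, satisfies the hypotheses of Theorem \ref{cor_van_pi'}. Since $\rad{\pi}{G/H}=1$, that theorem forces $G/H$ to be nilpotent, and $G/H$ is isomorphic to a Hall $\pi'$-subgroup of $G$.

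There is essentially no obstacle here: the one thing to verify is the bookkeeping that the hypotheses of Theorem \ref{teoC} (2) (and, for the final clause, of Theorem \ref{cor_van_pi'}) are met with $N=G$, which is routine once one uses the standard fact, recorded just before Theorem \ref{cor_van_pi'}, that vanishing prime power order elements of $G/H$ lift to vanishing prime power order elements of $G$. It is worth remarking, as already observed in the Introduction, that this result extends the theorem of \cite{BQ} to an arbitrary set of primes, the case $\pi=\{p\}'$ recovering the normal $p$-complement criterion proved there.
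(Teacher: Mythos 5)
Your proposal is correct and takes exactly the paper's route: Theorem \ref{cor_van_pi} is presented there without a separate proof, precisely as the $N=G$ specialisation of Theorem \ref{teoC} (2), which is what you argue. Your unwinding of that proof in the case $N=G$ (reduction to $\rad{\pi}{G}=1$, Lemma \ref{wielandt} plus Theorem \ref{teoA} for each $p\in\pi$, and passing to the $\pi'$-group $G/H$ for the final clause) is also faithful to the paper's proof of Theorem \ref{teoC} (2).
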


\begin{theorem}
\label{corE}
Let $G$ be a group. Suppose that $\abs{x^G}$ is either a $\pi$-number or a $\pi'$-number for every prime power order $\pi$-element $x \in \van{G}$. Then $G/\rad{\pi'}{\fit{G}}$ has a normal Hall $\pi$-subgroup. In particular, $G$ has $\pi$-length at most 1.
\end{theorem}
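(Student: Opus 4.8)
The plan is to deduce Theorem \ref{corE} directly from Theorem \ref{teoE} by specialising to the case $N=G$, and then to clean up the conclusion using the fact that $\fit{G}$ is nilpotent. Taking $N=G$ in Theorem \ref{teoE}, the hypothesis that $\abs{x^G}$ is either a $\pi$-number or a $\pi'$-number for every prime power order $\pi$-element $x\in\van{G}$ is stronger than ``$\abs{x^G}$ is a prime power'' only if one restricts the primes involved, so I would not apply Theorem \ref{teoE} verbatim; instead I would mimic its proof. First I would pass to $\overline{G}:=G/\fit{G}$ and observe, exactly as in the proof of Theorem \ref{teoE}, that it suffices to show $\overline{G}$ has a normal Hall $\pi$-subgroup, since $\fit{G}/\rad{\pi'}{\fit{G}}$ is a $\pi$-group and hence $\overline{G}$ having a normal Hall $\pi$-subgroup pulls back to $G/\rad{\pi'}{\fit{G}}$ having one.

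Next I would argue by contradiction: suppose $\overline{G}$ does not have a normal Hall $\pi$-subgroup. By Proposition \ref{nilphall} (applied with the roles $N=G$, ambient group $G$), this forces $\overline{G}$ to contain a nontrivial vanishing $q$-element $\overline{x}$ for some prime $q\in\pi$ — more precisely, there is a $q$-element $x\in(G\cap\van{G})\smallsetminus\fit{G}$ whose image is vanishing in $\overline{G}$. By hypothesis $\abs{x^G}$ is either a $\pi$-number or a $\pi'$-number. If $\abs{x^G}$ is a $\pi$-number, then Lemma \ref{wielandt} (with $\pi$, noting $x$ lies in a Sylow $q$-subgroup hence in a Hall $\pi$-subgroup) gives $x\in\rad{\pi}{G}\leqslant\fit{G}$, a contradiction. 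So $\abs{x^G}$ is a $\pi'$-number; since $x$ is a $q$-element with $q\in\pi$, $\abs{x^G}$ is in particular coprime to $q$. Then Proposition \ref{in_fitting}, in its second assertion (or via Lemma \ref{berkokazarin}: $[x^G,x^G]$ is a $\pi'$-group, in particular a $q'$-group, so $\overline{\langle x\rangle}$ is subnormal nilpotent in $\overline{G}$), shows $\overline{x}\in\fit{\overline{N}}$ where $\overline{N}=\overline{G}$, hence $\overline{x}\in\rad{q}{\overline{G}}$. Now Proposition \ref{brough} applied to the nontrivial normal $q$-subgroup $\rad{q}{\overline{G}}$ of $\overline{G}$, which contains the vanishing element $\overline{x}$, forces $q\mid\abs{\overline{x}^{\overline{G}}}$, whence $q\mid\abs{x^G}$, contradicting that $\abs{x^G}$ is a $\pi'$-number. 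This contradiction establishes that $\overline{G}$ has a normal Hall $\pi$-subgroup.

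For the final sentence, once $G/\rad{\pi'}{\fit{G}}$ has a normal Hall $\pi$-subgroup, write $K=\rad{\pi'}{\fit{G}}$, a normal $\pi'$-subgroup of $G$; a normal Hall $\pi$-subgroup $L/K$ of $G/K$ gives a normal series $1\unlhd K\unlhd L\unlhd G$ with $K$ a $\pi'$-group, $L/K$ a $\pi$-group, and $G/L$ a $\pi'$-group, so the $\pi$-length of $G$ is at most $1$. The main obstacle I anticipate is purely bookkeeping: making sure the ``vanishing'' descends correctly to $\overline{G}=G/\fit{G}$ — that a prime power order vanishing element of the quotient lifts to a prime power order vanishing element of $G$ of the same order — but this is precisely the remark recorded just before the proof of Theorem \ref{teoC}, using the bijection between $\irr{G/\fit{G}}$ and the characters of $G$ containing $\fit{G}$ in their kernel, together with part (c) of the elementary lemma on $p$-elements. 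No genuinely new idea beyond the machinery of Section 4 should be needed; the content is entirely in Theorem \ref{teoA}, Proposition \ref{in_fitting}, and Proposition \ref{brough}.
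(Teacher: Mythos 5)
Your instinct that Theorem \ref{corE} does not follow verbatim from Theorem \ref{teoE}, and that one should instead rerun that proof, is the right starting point; but the two places where you depart from the argument of Theorem \ref{teoE} are exactly where your proof breaks. In the case where $\abs{x^G}$ is a $\pi$-number you invoke Lemma \ref{wielandt} to get $x\in\rad{\pi}{G}\leqslant\fit{G}$. This fails twice over: Lemma \ref{wielandt} requires $x$ to lie in a Hall $\pi$-subgroup of $G$, which need not exist (and a Sylow $q$-subgroup is in general not contained in one); and even granting $x\in\rad{\pi}{G}$, the containment $\rad{\pi}{G}\leqslant\fit{G}$ is false as soon as $\pi$ contains more than one prime (e.g. $\rad{\pi}{S_4}=S_4\neq\fit{S_4}$ for $\pi=\{2,3\}$). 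In the proof of Theorem \ref{teoE} this step is sound only because there $\abs{x^G}$ is a power of the single prime $q$, so Lemma \ref{wielandt} applied with the set $\{q\}$ gives $x\in\rad{q}{N}\leqslant\fit{N}$. In the other case, where $\abs{x^G}$ is a $\pi'$-number, Lemma \ref{berkokazarin} does not apply at all (it needs a prime-power class size, and a $\pi'$-number need not be one), while the general form of Proposition \ref{in_fitting} for the set $\pi'$ requires $G$ to be $\pi$-separable, which Theorem \ref{corE} does not assume. Again, Theorem \ref{teoE} sidesteps this because its class sizes are powers of a single prime, the one case of Proposition \ref{in_fitting} that needs no separability.

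These gaps cannot be patched, because the statement as printed appears to be false: take $G=A_5$ and $\pi=\{2\}$. The non-trivial $2$-elements of $A_5$ are the involutions, all vanishing, with class size $15$, which is a $\pi'$-number, so the hypothesis holds; yet $\fit{A_5}=1$ and $A_5$ has no normal Sylow $2$-subgroup. The paper offers no separate proof of Theorem \ref{corE}, presenting it as the case $N=G$ of earlier results, but it is a hybrid: it carries the hypothesis of Theorem \ref{pi-pi'} (which additionally demands $\pi$-separability and concludes only that $G/\rad{\pi'}{G}$ has a normal Hall $\pi$-subgroup) together with the conclusion of Theorem \ref{teoE} (whose hypothesis is the stronger one that each $\abs{x^G}$ is a prime power). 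Your argument does go through essentially verbatim under the prime-power hypothesis, where both of your problematic cases collapse to a single prime; under the $\pi$-separability hypothesis, the case ``$\abs{x^G}$ a $\pi$-number'' only yields $x\in\rad{\pi}{G}$, which is precisely why Theorem \ref{pi-pi'} obtains normality of a Hall $\pi$-subgroup modulo $\rad{\pi'}{N}$ and not modulo $\rad{\pi'}{\fit{N}}$. You should flag the statement itself rather than try to force the proof.
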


\begin{proof}[Proof of Theorem \ref{cor_nilpotent}]
Arguing as in the proof of Theorem \ref{teoE} we can see that $G/\fit{G}$ has no prime power order vanishing elements. Thus, Theorem \ref{carac} applies and $G/\fit{G}$ is abelian, so $G'$ is nilpotent.
\end{proof}

\bigskip

%%%%%%%%%%%%%%%%%%%%%%%%%%%%%%%%%%%%%%%%%%%%%%%%%%%%%%%%%%%%%%%%%%%%%%%%%%%%%%%%%%%%%%%%%%%%%%%%

\noindent \textbf{Acknowledgements:} This research has been carried out during a stay of the second author at the Instituto Universitario de Matem\'atica Pura y Aplicada (IUMPA-UPV) of the Universitat Polit\`ecnica de Val\`encia. He wishes to thank the members of the IUMPA for their hospitality. The authors would like to thank G. Navarro for useful conversations during the preparation of the paper.

%%%%%%%%%%%%%%%%%%%%%%%%%%%%%%%%%%%%%%%%%%%%%%%%%%%%%%%%%%%%%%%%%%%%%%%%%%%%%%%%%%%%%%%%%%%%%%%%


\begin{thebibliography}{99}



	\bibitem{BF}
	\begin{sc} A. Beltr\'an and M.J. Felipe\end{sc}:
	Prime powers as conjugacy class lengths of $\pi$-elements.
	Bull. Austral. Math. Soc. 69 (2004) 317--325.
	
	
	\bibitem{BFM}
	\begin{sc} A. Beltr\'an, M.J. Felipe, G. Malle, A. Moret\'o, G. Navarro, L. Sanus, R. Solomon and P.H. Tiep\end{sc}:
	Nilpotent and abelian Hall subgroups in finite groups.
	Trans. Amer. Math. Soc. 368 (2016) 2497--2513.


	\bibitem{BK}
	\begin{sc}Y. Berkovich and L. S. Kazarin\end{sc}:
	Indices of elements and normal structure of finite groups.
	J. Algebra 283 (2005) 564--583.
	
	
	\bibitem{BCLP}
	\begin{sc}M. Bianchi, D. Chillag, M.L. Lewis and E. Pacifici\end{sc}:
	Character degree graphs that are complete graphs.
	Proc. Amer. Math. Soc. 135 (2007) 671--676.
	
	
	
	\bibitem{BQ}
	\begin{sc}J. Brough and Q. Kong\end{sc}:
	On vanishing criteria that control finite group structure II.
	Bull. Aust. Math. Soc. 98 (2018) 251--257.
	

	\bibitem{Br}
	\begin{sc}J. Brough\end{sc}:
	Non-vanishing elements in finite groups.
	J. Algebra 460 (2016) 387--391.	
	
	
	\bibitem{DPSS}
	\begin{sc}S. Dolfi, E. Pacifici, L. Sanus and P. Spiga\end{sc}:
	On the orders of zeros of irreducible characters.
	J. Algebra 321 (2009) 345--352.
	
	
	\bibitem{G}
	\begin{sc}M. Gr\"uninger\end{sc}:
	Two remarks about non-vanishing elements in finite groups.
	J. Algebra 460 (2016) 366–-369.
	
	
	\bibitem{ISA}
	\begin{sc}I.M. Isaacs\end{sc}:
	Character theory of finite groups.
	Academic Press Inc., London (1976).
	
	
	\bibitem{INW}
	\begin{sc}I.M. Isaacs, G. Navarro and T.R. Wolf\end{sc}:
	Finite group elements where no irreducible character vanishes.
	J. Algebra 222 (1999) 413--423.
	
	
	\bibitem{MN}
	\begin{sc}G. Malle and G. Navarro\end{sc}:
	Characterizing normal Sylow $p$-subgroups by character degrees.
	J. Algebra 370 (2012) 402--406.
	
	
	\bibitem{MNO}
	\begin{sc}G. Malle, G. Navarro and J.B. Olsson\end{sc}:
	Zeros of characters of finite groups.
	J. Group Theory 3 (2000) 353--368.	
	
	
	\bibitem{GAP}
	\begin{sc} The GAP Group: \end{sc}
	GAP -- Groups, Algorithms, and Programming,
	\url{http://www.gap-system.org}, Version 4.10.0, 2018.


	
	
\end{thebibliography}
\end{document}